\documentclass[11pt]{article}
 
\usepackage[margin=1in]{geometry}
\usepackage[T1]{fontenc}
\usepackage{lmodern}
\usepackage{microtype}
\usepackage{amsmath,amssymb,amsthm,mathtools}
\usepackage{booktabs,tabularx,array,longtable}
\usepackage{enumitem}
\usepackage[hidelinks]{hyperref}
\usepackage[nameinlink,capitalise,noabbrev]{cleveref}
\usepackage{xurl}
\usepackage{mathtools}
 
\allowdisplaybreaks
\setlist[itemize]{leftmargin=2em,itemsep=2pt,topsep=4pt}
\setlist[enumerate]{leftmargin=2.2em,itemsep=2pt,topsep=4pt}
 
\hypersetup{
  pdftitle={Proper Hat-Guessing on Two-Spine Book Graphs},
  pdfauthor={Yulin Zhai},
  pdfsubject={Proper hat-guessing on two-spine book graphs},
  pdfkeywords={Hat guessing game, Proper hat guessing number, Book graph, Coverability, Pseudoforest}
}
 
\usepackage{aliascnt}

\newtheorem{theorem}{Theorem}[section]

\newaliascnt{lemma}{theorem}
\newtheorem{lemma}[lemma]{Lemma}
\aliascntresetthe{lemma}

\newaliascnt{proposition}{theorem}

\aliascntresetthe{proposition}

\newaliascnt{corollary}{theorem}
\newtheorem{corollary}[corollary]{Corollary}
\aliascntresetthe{corollary}

\newaliascnt{conjecture}{theorem}
\newtheorem{conjecture}[conjecture]{Conjecture}
\aliascntresetthe{conjecture}

\theoremstyle{definition}
\newaliascnt{definition}{theorem}
\newtheorem{definition}[definition]{Definition}
\aliascntresetthe{definition}

\newaliascnt{example}{theorem}

\aliascntresetthe{example}

\theoremstyle{remark}
\newaliascnt{remark}{theorem}
\newtheorem{remark}[remark]{Remark}
\aliascntresetthe{remark}

\newaliascnt{observation}{theorem}
\newtheorem{observation}[observation]{Observation}
\aliascntresetthe{observation}

\crefname{lemma}{Lemma}{Lemmas}
\Crefname{lemma}{Lemma}{Lemmas}
\crefname{proposition}{Proposition}{Propositions}
\Crefname{proposition}{Proposition}{Propositions}
\crefname{corollary}{Corollary}{Corollaries}
\Crefname{corollary}{Corollary}{Corollaries}
\crefname{conjecture}{Conjecture}{Conjectures}
\Crefname{conjecture}{Conjecture}{Conjectures}
\crefname{definition}{Definition}{Definitions}
\Crefname{definition}{Definition}{Definitions}
\crefname{example}{Example}{Examples}
\Crefname{example}{Example}{Examples}
\crefname{remark}{Remark}{Remarks}
\Crefname{remark}{Remark}{Remarks}
\crefname{observation}{Observation}{Observations}
\Crefname{observation}{Observation}{Observations}
 
\newcommand{\HGP}{\operatorname{HGP}}
\newcommand{\HG}{\operatorname{HG}}
\newcommand{\F}{\mathbb F}
\newcommand{\cC}{\mathcal C}
\newcommand{\cR}{\mathcal R}

\renewcommand{\vec}[1]{\boldsymbol{#1}}
\newcommand{\set}[1]{\left\{#1\right\}}
\newcommand{\abs}[1]{\left|#1\right|}
\newcommand{\restr}[1]{\mathord{\upharpoonright}_{#1}}
\DeclareMathOperator{\supp}{supp}
\DeclareMathOperator{\AGL}{AGL}
 
\title{Proper Hat-Guessing on Two-Spine Book Graphs}
 
\author{%
  Yulin Zhai\thanks{Institution email:
  \href{mailto:yz5460@columbia.edu}
  {\texttt{yz5460@columbia.edu}}.}\\[0.5em]
  {\small Department of Applied Physics and Applied Mathematics,
  Columbia University}\\[-0.15em]
  {\small 500 W.\ 120th Street, New York, NY 10027, USA.}\\[-0.15em]
}
\date{}
 
\begin{document}
 
\maketitle
 
\begin{abstract}
In the proper variant of the classical hat-guessing game on a graph, an adversary \emph{properly} colors the
vertices from a palette of \(q\) colors.  Each vertex sees only its neighbors'
colors and all vertices simultaneously guess their own color. The players win if at least one guess is correct.  We study this game on the book graph
\(B_{k,n}=K_k\vee\overline{K_n}\), with \(k\) mutually adjacent spine
vertices and \(n\) independent pages.

We first give a coverability characterization valid for every fixed spine
size.  Write \(\HGP(G)\) for the proper hat-guessing number of \(G\).
For a finite configuration \(P\), let \(\supp(P)\) denote the set of
colors appearing in its tuples.
Let \(C_k\) be the minimum of \(\abs P+\abs{\supp(P)}\) over all
non-coverable finite configurations \(P\) of proper \(k\)-tuples.  We prove
$ \sup_{n\geq 1}\HGP(B_{k,n})=C_k $ and that $\HGP(B_{k,n})=C_k\ \text{for all sufficiently large \(n\)}.$
Thus, the asymptotic problem for every fixed \(k\) reduces to a finite
extremal invariant.  In particular, coverability of two-spine configurations is equivalent to
pseudoforestness, and we determine the associated extremal problem exactly:
\(C_2=11\), with precisely two types of extremal obstruction.  Consequently,
\(\HGP(B_{2,n})\leq 11\) for every \(n\), with equality for all sufficiently
large \(n\); we give an explicit probabilistic estimate with a stabilization
threshold of at most $4 \times10^8$.
 
We also resolve the first two previously open finite cases. An explicit
seven-color construction with affine symmetry proves
$
  \HGP(B_{2,3})=7.
$
A counting-rigidity argument establishes the linear upper bound
$
  \HGP(B_{2,n})\leq n+3\ \text{for all } n\geq 4,
$
which together with monotonicity yields
$
  \HGP(B_{2,4})=7.
$
Finally, a general box
obstruction gives explicit uniform bounds on \(C_k\).
\end{abstract}
 
\medskip
\noindent\textbf{Keywords:}
Hat guessing game; Proper hat guessing number; Book graph;
Coverability; Pseudoforest
 
\smallskip
\noindent\textbf{MSC 2020:}
Primary 05C57; Secondary 05C15, 05D15, 91A43
 
\section{Introduction}
 
\subsection{Game}
 
Let \(G=(V,E)\) be a finite simple graph and let \([q]=\{0,1,\dots,q-1\}\).
Before play begins, the vertices agree on deterministic guess functions.  An
adversary then assigns a color from \([q]\) to every vertex.  Each vertex sees
the colors on its open neighborhood, and all vertices simultaneously guess
their own colors.  The team wins if at least one guess is correct.  In the
classical game, the coloring is arbitrary; in the \emph{proper} game, the
adversary must assign distinct colors to adjacent vertices.  The corresponding
thresholds are the hat-guessing number \(\HG(G)\) and the proper
hat-guessing number \(\HGP(G)\), which denotes the largest $q$ for which the team can force a win.
 
The book graph considered here is
$
   B_{k,n}=K_k\vee\overline{K_n}.
$
Its \(k\) clique vertices are the \emph{spines}, its \(n\) independent
vertices are the \emph{pages}, and every page is adjacent to every spine. Book graphs are particularly interesting in hat-guessing games because they consist of a bounded clique observed by arbitrarily many mutually nonadjacent players.
 
The proper variant was introduced in~\cite{ABBCBKMMRT}.  That work determined
several elementary families and left
\(\HGP(B_{2,3})\in\{6,7\}\); it also asked for uniform bounds on
\(\max_n\HGP(B_{k,n})\) for fixed \(k\).  The results below determine the
maximum for \(k=2\) and reduce the problem for every fixed \(k\) to a finite
coverability constant.
 
\subsection{Main results}
 
For a finite set \(P\) of proper \(k\)-tuples, let \(\supp(P)\) be the set of
colors occurring in its coordinates.  A configuration is \emph{coverable} if
its tuples can be assigned to the \(k\) spines so that no two tuples assigned
to spine \(i\) agree in any coordinates other than \(i\).  The precise
definition appears in \cref{sec:coverability}.
\begin{equation}\label{eq:Ck-intro}
 C_k \coloneqq \min\set{\abs P+\abs{\supp(P)}:
      P\text{ is finite and non-coverable}}.
\end{equation}
 
\begin{theorem}[Stabilization for fixed spine size]\label{thm:intro-stabilization}
For every \(k\geq2\),
$
   \sup_{n\geq1}\HGP(B_{k,n})=C_k.
$
Moreover, there is \(N_k<\infty\) such that
\(\HGP(B_{k,n})=C_k\) for every \(n\geq N_k\).
\end{theorem}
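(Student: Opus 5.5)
The plan is to prove the two inequalities separately: the adversary wins with \(q=C_k+1\) colors on every \(B_{k,n}\), and the team wins with \(q=C_k\) colors once \(n\) is large. The starting observation is that a page sees only the spine tuple \(s\), so page \(j\) guesses a color \(f_j(s)\). Spine \(i\) sees \(s_{-i}\) and the page vector \(c\), so it guesses \(g_i(s_{-i},c)\). I will also use that \(\HGP(B_{k,n})\) is nondecreasing in \(n\), since extra pages can simply be ignored. I assume \(C_k<\infty\), i.e.\ that some non-coverable configuration exists (e.g.\ the box obstruction mentioned in the abstract).

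\emph{Upper bound (every \(n\)).} Let \(q=C_k+1\) and fix a non-coverable \(P\) with \(\abs P+\abs{\supp(P)}=C_k\). The adversary only uses spine tuples in \(P\), and colors each page from the \(q-\abs{\supp(P)}=\abs P+1\) colors outside \(\supp(P)\); such page colors are automatically proper. For page \(j\), at most \(\abs P\) of these colors lie in \(\{f_j(s):s\in P\}\). So the adversary can fix one color \(c_j\) outside \(\supp(P)\) avoiding all of them, and this choice does not depend on \(s\). With \(c\) now fixed, no page is ever correct. Suppose every \(s\in P\) had a correct spine, and assign \(s\) to one such spine \(i\). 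Two distinct tuples assigned to spine \(i\) that agree off coordinate \(i\) would receive the same guess \(g_i(s_{-i},c)\), yet both would have to be correct, which is impossible. This yields a covering of \(P\) in the sense of \cref{sec:coverability}, contradicting non-coverability. Hence \(\HGP(B_{k,n})\le C_k\) for all \(n\).

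\emph{Lower bound (large \(n\)).} Let \(q=C_k\). For a page vector \(c\), let \(P_c\) be the set of proper tuples \(s\) with \(c_j\notin s\) and \(c_j\neq f_j(s)\) for all \(j\). These are exactly the tuples on which all pages are wrong. If \(P_c\) is coverable for every \(c\), the spines win: fix a covering of each \(P_c\), and let spine \(i\), seeing \(c\) and \(s_{-i}\), guess the coordinate \(s_i\) of the unique tuple of \(P_c\) assigned to \(i\) that extends \(s_{-i}\).

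Coverability is inherited by subsets. It therefore suffices to ensure that no non-coverable \(P\) over \([q]\) is contained in any \(P_c\); there are finitely many such \(P\). The inclusion \(P\subseteq P_c\) forces, for every \(j\), that \(c_j\) lies in \([q]\setminus\supp(P)\) but outside \(f_j(P)\). Since \(\abs P+\abs{\supp(P)}\ge C_k=q\), the complement \([q]\setminus\supp(P)\) has at most \(\abs P\) colors. Hence a single page \(j\) whose \(f_j\) maps \(P\) onto a superset of that complement blocks \(P\) for all \(c\); if the complement is empty, \(P\) is blocked trivially.

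Now choose each \(f_j\) independently and uniformly at random. Each fixed \(P\) is then blocked by a given page with probability \(p_P>0\) depending only on \(q\) and \(k\), so \(P\) survives with probability at most \((1-p_P)^n\). A union bound over the finitely many \(P\) shows that a good family \((f_j)\) exists for \(n\ge N_k\). Monotonicity in \(n\) then gives \(\HGP(B_{k,n})=C_k\) for all \(n\ge N_k\), and hence the supremum equals \(C_k\).

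The main obstacle is checking that coverability as defined in \cref{sec:coverability} matches exactly the condition spines can realize: distinct tuples on spine \(i\) must be distinguishable from \(s_{-i}\) once \(c\) is fixed. Both directions above depend on this match. The probabilistic union bound is routine, and only its quantitative form matters for the explicit threshold.
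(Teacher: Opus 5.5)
Your proof is correct, and most of it follows the paper. The upper bound is the paper's argument with the escape lemma and the target criterion unpacked inline. There is one small gap there: ruling out \(q=C_k+1\) yields \(\HGP(B_{k,n})\le C_k\) only after you invoke palette monotonicity (\cref{lem:q-monotone}). Alternatively, note that your counting works verbatim for every \(q\ge C_k+1\), since then \(q-\abs{\supp(P)}\ge\abs P+1\).

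The real difference is in the lower bound.
\begin{itemize}
\item \emph{The paper's route.} It builds the guarding family deterministically. For each of the finitely many non-coverable \(P\subseteq\Omega_{k,C_k}\), it dedicates one page whose function guards \(P\) (\cref{obs:single-guard}). This is fully explicit and gives \(N_k\le 2^{(C_k)_k}\).
\item \emph{Your route.} You choose the page functions at random and take a union bound. This is non-constructive. However, a page guards a given \(P\) with probability at least \(C_k^{-(C_k-k)}\): at most \(C_k-k\) colors lie outside \(\supp(P)\), and each must be hit at a prescribed tuple of \(P\). So your argument gives the much smaller threshold \(n>(C_k)_k\log 2\cdot C_k^{\,C_k-k}\). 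This is exactly the method the paper reserves for its explicit two-spine estimate in \cref{thm:n0}.
\end{itemize}

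Your own remark that a single suitably chosen page blocks each \(P\) already gives the paper's deterministic family. So the randomness buys only the quantitative improvement in \(N_k\); it is not needed for the qualitative stabilization statement.
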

 
For \(k=2\), a configuration is a bipartite graph and coverability is
equivalent to every component having at most one cycle.
 
\begin{theorem}[Two spines]\label{thm:intro-two-spines}
The coverability constant is \(C_2=11\).  Consequently,
$
  \HGP(B_{2,n})\leq11\ \text{for}\ n\geq1\ \text{and} \
  \HGP(B_{2,n})=11\ \text{for every}\ n\geq n_0,
$
where
$
  n_0\leq
  \left\lceil 110(\log 2)\,9^7\right\rceil
  =364{,}683{,}163 \approx 4 \times 10^8.
$
In particular,
\[
   \lim_{n\to\infty}\HGP(B_{2,n})
   =\sup_{n\geq1}\HGP(B_{2,n})=11.
\]
\end{theorem}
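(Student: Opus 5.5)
The plan is to reduce the statement to a finite extremal computation on bipartite graphs and then invoke \cref{thm:intro-stabilization}. For the reduction, I encode a configuration \(P\) of proper pairs \((a,b)\) as a bipartite multigraph-free graph \(G_P\). Its left vertices are the first-coordinate colors and its right vertices are the second-coordinate colors, kept as separate copies, and each tuple \((a,b)\) is an edge between left-\(a\) and right-\(b\).

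Assigning a tuple to spine \(1\) requires that tuples at spine \(1\) have pairwise distinct second coordinates. So it amounts to charging the edge to its right endpoint, and spine \(2\) corresponds to charging the left endpoint. Coverability therefore means there is an injective map from edges to incident vertices. By Hall's theorem, or the standard orientation argument, this holds iff every connected subgraph has at most as many edges as vertices, i.e.\ iff every component of \(G_P\) is a pseudoforest.

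The quantity \(\abs P+\abs{\supp(P)}\) equals \(\abs{E}+(\abs V-m)\). Here \(m\) is the number of colors used on both sides. Each such color identifies a left vertex with a right vertex, and properness forces the two to be non-adjacent. These identifications form a matching in the bipartite complement of \(G_P\).

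For the lower bound \(C_2\ge 11\), take a minimal non-coverable \(P\). By minimality, \(G_P\) is connected with \(\abs E=\abs V+1\); deleting tuples or components only decreases the objective. Since each color gives at most one left and one right vertex, \(\abs{\supp(P)}\ge\max(L,R)\ge\lceil \abs V/2\rceil\). Hence the objective is at least \(\abs V+1+\lceil\abs V/2\rceil\), which is at least \(12\) once \(\abs V\ge7\). For \(\abs V\le 4\), no bipartite graph has \(\abs V+1\) edges.

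This leaves \(\abs V\in\{5,6\}\), which I check by hand. If \(\abs V=5\), the only possibility is \(K_{2,3}\). It is complete bipartite, so \(m=0\) and the total is \(6+5=11\). If \(\abs V=6\) with \(7\) edges, the split is \(3+3\) or \(2+4\).
\begin{itemize}
\item In the \(2+4\) case, \(\abs{\supp}\ge4\), so the total is at least \(11\).
\item In the \(3+3\) case, the complement in \(K_{3,3}\) has \(2\) edges, so \(m\le2\) and the total is at least \(7+4=11\). Equality holds when the two missing edges are disjoint, i.e.\ \(K_{3,3}\) minus a \(2\)-matching, realized with \(4\) colors.
\end{itemize}
These two families, \(K_{2,3}\) on five colors and the six-vertex \(7\)-edge graphs on four colors, should be the ``two types'' of extremal obstruction. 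I still need to sort out whether the \(2+4\) case genuinely attains \(4\) colors or collapses into one of these. Any explicit \(K_{2,3}\), for instance tuples \((a_i,b_j)\) with \(\set{a_1,a_2}\cap\set{b_1,b_2,b_3}=\emptyset\), gives \(C_2\le 11\). Hence \(C_2=11\).

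The consequences \(\HGP(B_{2,n})\le 11\) for all \(n\), equality for \(n\ge N_2\), and the limit statement then follow directly from \cref{thm:intro-stabilization}. The main obstacle is the explicit threshold \(n_0\). Here I would rerun the existence half of the stabilization proof quantitatively for \(q=11\): pages use random guessing functions, and one must show that with positive probability every proper coloring is caught.

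The shape \(110\log 2\cdot 9^7\) suggests the following union bound. There are \(11\cdot10=110\) proper spine colorings. For each, roughly \(2^{n}\)-type failure events are controlled, and a single page wins with probability about \(9^{-7}\), coming from the \(9\) colors available to a page and a \(7\)-fold requirement tied to the extremal structure. Requiring \(110\cdot(\text{failure prob})<1\) via \((1-9^{-7})^n\le e^{-n9^{-7}}\) then gives \(n\ge 110(\log2)9^7\). Getting the exponent \(7\) and the factor \(\log 2\) exactly right, from the precise form of the probabilistic estimate in the stabilization proof, is where I expect the real work to lie.
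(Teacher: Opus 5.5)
Your derivation of $C_2=11$ is correct and differs from the paper's only in bookkeeping. You split on $v=\abs V$ using $s\geq\lceil v/2\rceil$. The paper instead writes $e+s=2s+t+1$ with $t=\abs{R\cap K}$ and splits on $s$ via the capacity condition $(r-1)(c-1)\geq t+2$. The unresolved $2+4$ case does not affect this statement. It also cannot reach $11$: $K_{2,4}$ minus an edge has a one-edge bipartite complement, so at most one color is shared, $s\geq5$, and $e+s\geq12$. The consequences $\HGP(B_{2,n})\leq11$, eventual equality, and the limit then follow from \cref{thm:Ck}, as you say.

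The genuine gap is the explicit threshold on $n_0$, which is part of the statement. You leave it unproved, and the mechanism you guess cannot produce it: $110\cdot(1-9^{-7})^n<1$ gives $n>9^7\log110$, not $110(\log2)9^7$.

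\begin{itemize}
\item \emph{What to show.} The target is not that random pages catch every proper coloring; the bad events are not indexed by spine pairs. The argument runs through \cref{thm:guard-characterization}: page functions complete to a winning $11$-color strategy iff every non-coverable configuration is guarded.
\item \emph{Reduction to bicyclic $P$.} Guarding is upward closed: if $Q\subseteq P$ and page $j$ guards $Q$, then $[q]\setminus\supp(P)\subseteq[q]\setminus\supp(Q)\subseteq g_j(Q)\subseteq g_j(P)$. Every non-pseudoforest contains a bicyclic subconfiguration, so it suffices to guard all bicyclic $P$.
\item \emph{Random pages.} Let each page choose each of its $110$ values $g(x,y)$ independently and uniformly from the nine legal colors $[11]\setminus\{x,y\}$.
\item \emph{Per-page guard probability.} For bicyclic $P$ put $m=11-s(P)$. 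Your own case analysis shows $s\geq4$ for every bicyclic configuration, so $m\leq7$. Also $e+s\geq11$ gives $e\geq m$. Prescribe the $m$ missing colors on $m$ distinct edges of $P$; each is legal there because it lies outside the support. A page realizing this prescription guards $P$, and it does so with probability $9^{-m}\geq9^{-7}$. Hence $P$ survives $N$ independent pages with probability at most $(1-9^{-7})^N$.
\item \emph{Union bound.} The union runs over configurations, i.e.\ subsets of the $110$ legal ordered pairs, of which there are at most $2^{110}$. The condition $2^{110}e^{-N9^{-7}}<1$ is exactly $N>110(\log2)\,9^7$. This is where the factor $\log2$ and the exponent $7$ come from.
\end{itemize}
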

 
We also determine the first two unresolved cases left by previous work and give a
linear bound that is useful before stabilization.
 
\begin{theorem}[Small exact cases]\label{thm:intro-small}
We prove $
  \HGP(B_{2,3})=\HGP(B_{2,4})=7\
  \text{and}\
  \HGP(B_{2,n})\leq n+3\ \text{for}\ n\geq4.
$
Moreover, \(\HGP(B_{2,n})\geq8\) for every \(n\geq6\).
\end{theorem}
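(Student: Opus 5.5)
Theorem~\ref{thm:intro-small} has four parts: a lower bound \(\HGP(B_{2,3})\geq7\), the upper bound \(\HGP(B_{2,n})\leq n+3\) for \(n\geq4\), the squeeze that gives \(\HGP(B_{2,4})=7\), and the lower bound \(\HGP(B_{2,n})\geq 8\) for \(n\geq6\).

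\emph{Strategy framework.} Fix guess functions \(g_1,g_2\) for the spines and \(f_1,\dots,f_n\) for the pages. Page \(j\) sees only the ordered pair \((a,b)\) of spine colors, with \(a\neq b\). So \(f_j\) is a function on the set of proper pairs, and page \(j\) guesses correctly iff its color equals \(f_j(a,b)\).

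Fix a spine pair \((a,b)\) and the set \(S=\{f_j(a,b)\}\subseteq[q]\setminus\{a,b\}\) of page guesses. The adversary defeats all pages by giving every page a color outside \(S\cup\{a,b\}\). The pages are independent, so repeated colors are allowed. Spine 1 sees \(b\) and the page colors. It must guess \(a\) correctly for every pair \((a,b)\) that the adversary can exploit in this way.

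\emph{Upper bound \(n+3\).} I take \(q=n+4\) and count. The pages forbid at most \(n\) colors, so at least \(q-2-n=2\) colors remain free for the pages at each pair \((a,b)\). I would choose page vectors using only a few colors and argue by pigeonhole.

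Fix a page coloring \(c\) drawn from a small pool of colors, for example all pages colored from a set \(T\) of size 2. Spine 1, seeing \((b,c)\), commits to one value \(a\), and spine 2 likewise commits to one value \(b\). So the pairs \((a,b)\) defended by spines under the coloring \(c\) form a partial matching-like structure: at most one \(a\) per \(b\) and at most one \(b\) per \(a\).

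The pairs on which the pages win given \(c\) are those where \(c_j=f_j(a,b)\) for some \(j\). I would count triples (pair, coloring) over all \(c\in T^n\) and all choices of \(T\) disjoint from \(\{a,b\}\). Each pair must be covered either by a spine or by a page.

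A page covers \((a,b)\) for a fraction of colorings roughly equal to \(1-(1-1/|T|)^n\) of those containing its guess. The "rigidity" should come from the fact that with only 2 free colors, the page guesses at \((a,b)\) must hit every coloring in \(\{\text{free colors}\}^n\), which is impossible unless the spines rescue. This forces the spine functions to be essentially determined, and a contradiction is then found in an \(a\leftrightarrow b\) cycle.

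This step is the main obstacle. I would first try to show that for each coloring \(c\) supported on the two free colors, at most \(q-1\) pairs are spine-covered, while the number of uncovered pairs exceeds this.

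\emph{\(\HGP(B_{2,4})=7\).} This follows from the upper bound at \(n=4\), which gives \(\leq7\), together with \(\HGP(B_{2,3})\geq7\) and monotonicity in \(n\) (adding a page that always guesses arbitrarily can only help).

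\emph{\(B_{2,3}\geq7\).} I would give an explicit strategy over \(\F_7\). The page guesses would be affine, for example \(f_j(a,b)=a+\lambda_j(b-a)\) with distinct \(\lambda_j\notin\{0,1\}\). The spine rules would be chosen equivariant under \(\AGL(1,7)\), so that verification reduces to finitely many orbit representatives. The property to check is that, for every proper pair and every page coloring avoiding the page guesses, one of the spines' guesses is correct.

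\emph{\(\geq8\) for \(n\geq6\).} Similarly, I would exhibit a strategy over 8 colors, possibly \(\F_8\) with an affine symmetry, with six affine page rules. Here I would rely on a computer-checkable orbit verification, or alternatively on a coverability construction in the spirit of Theorem~\ref{thm:intro-stabilization} restricted to small \(n\).
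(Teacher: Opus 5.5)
The genuine gap is the upper bound $\HGP(B_{2,n})\le n+3$, which you flag as the main obstacle. Your sketch never produces a contradiction. The missing idea is a tight counting step that forces the page guesses to be rigid.

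At $q=n+4$, restrict the adversary to the $q(q-1)(q-2)$ colorings in which all pages share one color $c\notin\{x,y\}$. Each page is correct on at most $q(q-1)$ of them, since its guess matches at most one $c$ per spine pair. Each spine is also correct on at most $q(q-1)$ of them: its view is determined by $(y,c)$, resp.\ $(x,c)$, and at most one hidden color matches its guess. With $n+2=q-2$ players, the total capacity equals the number of colorings exactly. So equality holds throughout, and for every $(x,y)$ the $n$ page guesses are legal and pairwise distinct.

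Only then does a block count work. Color $n_1=\lfloor n/2\rfloor$ pages $a$ and $n_2=\lceil n/2\rceil$ pages $b$. Distinctness gives exactly $(n_1+2)(n_2+2)-2$ page-defeating pairs $(a,b)$ for each $(x,y)$. For each $(a,b)$, the two spines cover at most $2(q-2)$ spine pairs (one per column, one per row). Double counting then yields $n_1n_2\le2$, which contradicts $n\ge4$.

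Your sketch also has three concrete problems:
\begin{itemize}
\item Your proposed bound of $q-1$ spine-covered pairs is not the correct capacity, which is $2(q-2)$. An argument built on $q-1$ would be unsound.
\item ``A coloring supported on the two free colors'' is not well defined, because the free colors depend on the spine pair. Moreover, such colorings defeat every page outright.
\item The rigidity step cannot be bypassed. At $n=4$, if both blocks guess the same two colors at $(x,y)$, only $4\cdot3=12$ pairs defeat the pages. That equals the spine capacity $2(q-2)=12$, so the block count alone gives nothing in exactly the case needed for $\HGP(B_{2,4})=7$.
\end{itemize}

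Two further points. First, you never establish $\HGP(B_{2,3})\le7$, which the statement asserts and which the linear bound does not cover at $n=3$. The paper gets it from the union bound ($5^3>2\cdot7^2$ at $q=8$). It would also follow from $\HGP(B_{2,3})\le\HGP(B_{2,4})$ once the $n=4$ case is proved.

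Second, your lower-bound plans follow the paper's route: affine pages $x+\lambda_j(y-x)$ over $\F_7$ with $\AGL(1,\F_7)$-equivariance, and all six multipliers of $\mathrm{GF}(8)\setminus\{0,1\}$ (five do not suffice). But they stop before the substance. The win condition is not that one spine handles every exploitable pair. It is that every target $T_{\vec z}$ splits into a part with at most one edge per column (spine~1) and a part with at most one edge per row (spine~2), i.e.\ that it is a pseudoforest. This must actually be checked on the nine orbit representatives for $q=7$ and on all $8^6$ page vectors for $\mathrm{GF}(8)$. Your fallback via the stabilization construction is not viable for $n=6$, since it dedicates a separate page to every non-coverable configuration.
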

 
For general \(k\), the constant in \cref{eq:Ck-intro} has a useful explicit
upper bound.
 
\begin{theorem}[Box bound]\label{thm:intro-box}
Let \(k\geq2\) and let \(m_1,\dots,m_k\geq2\) be integers satisfying
\(\sum_{i=1}^k1/m_i<1\).  Then
\[
  C_k\leq \prod_{i=1}^k m_i+\sum_{i=1}^k m_i,
  \qquad
  \HGP(B_{k,n})\leq
  \prod_{i=1}^k m_i+\sum_{i=1}^k m_i\
  \text{for}\  n\geq1.
\]
\end{theorem}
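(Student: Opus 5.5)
Since \cref{thm:intro-stabilization} gives \(\HGP(B_{k,n})\leq\sup_n\HGP(B_{k,n})=C_k\), the second inequality follows from the first. It therefore suffices to exhibit one finite non-coverable configuration \(P\) with \(\abs P+\abs{\supp(P)}=\prod_i m_i+\sum_i m_i\). The natural candidate is a box. Take pairwise disjoint color sets \(S_1,\dots,S_k\) with \(\abs{S_i}=m_i\), and let
\[
  P=S_1\times S_2\times\cdots\times S_k .
\]
Every tuple is proper because its coordinates come from disjoint sets. We have \(\abs P=\prod_i m_i\) and \(\supp(P)=\bigcup_i S_i\) has size \(\sum_i m_i\), so the count is exactly right. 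Only non-coverability remains.

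Suppose \(P\) is covered by an assignment \(P=P_1\sqcup\cdots\sqcup P_k\), where two tuples assigned to spine \(i\) never agree in any coordinate \(j\neq i\). (I read the definition in \cref{sec:coverability} this way; if it is phrased via the spine's guess depending on the other coordinates, the same condition should come out.) Fix \(i\) and any \(j\neq i\). The tuples in \(P_i\) have pairwise distinct \(j\)-th coordinates, so \(\abs{P_i}\leq m_j\). Choosing \(j\neq i\) to minimize \(m_j\) gives the bound \(\abs{P_i}\leq\min_{j\neq i}m_j\). This single-coordinate bound is too weak, since summing it does not beat \(\prod m_i\) in general. So I will use a fibre-counting argument instead.

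For each \(i\), consider the \(\prod_{j\neq i}m_j\) fibres of \(P\) in direction \(i\): lines on which all coordinates except the \(i\)-th are fixed. Two tuples of \(P_i\) agree in every coordinate \(j\neq i\) only if they are equal, and in fact they may agree in no coordinate \(j\neq i\) at all. So \(P_i\) is a partial "permutation-like" set. Projecting \(P_i\) onto any coordinate \(j\neq i\) is injective, which gives \(\abs{P_i}\leq\min_{j\neq i} m_j\), as above.

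The right approach is a weighted count. Summing \(\abs{P_i}\leq m_j\) does not directly give \(\sum_i 1/m_i\). Instead I count line–tuple incidences. Consider the lines of \(P\) parallel to axis \(i\) (all coordinates fixed except the \(i\)-th). There are \(N/m_i\) of them, where \(N=\prod_j m_j\). Pick two tuples of \(P_i\). If \(k\geq2\), they differ in all other coordinates, so they lie on distinct axis-\(i\) lines. Each axis-\(i\) line therefore contains at most one tuple of \(P_i\), giving \(\abs{P_i}\leq N/m_i\). Summing,
\[
  N=\sum_i\abs{P_i}\leq N\sum_i \frac1{m_i}<N,
\]
a contradiction. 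Hence \(P\) is non-coverable, and \(C_k\leq N+\sum_i m_i\).

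The main point to verify carefully is that the paper's formal definition of coverability matches this "no agreement off coordinate \(i\)" condition. The only property used is the weak consequence that two tuples on spine \(i\) cannot agree in all coordinates other than \(i\). That consequence should hold under any reasonable formalization, since spine \(i\) sees exactly those coordinates and must guess differently for the two tuples. The rest is the routine count above.
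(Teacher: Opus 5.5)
Your proof is correct and follows the paper's argument: the same disjoint box \(S_1\times\cdots\times S_k\), the same bound \(\abs{P_i}\leq\prod_{j\neq i}m_j\) (one tuple per axis-\(i\) line, i.e.\ per visible \((k-1)\)-tuple), and the stabilization theorem for the uniform bound on \(\HGP\). The paper's \(i\)-saturation is exactly injectivity of \(\pi_i\) on \(P_i\), so the stronger ``no agreement in any coordinate \(j\neq i\)'' reading is not the definition, and the \(\min_{j\neq i}m_j\) bound you drew from it is invalid; your final count uses only injectivity, so it is fine.
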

 
\subsection{Background and Related Work}
 
Hat-guessing games were introduced by Butler, Hajiaghayi, Kleinberg, and
Leighton~\cite{ButlerEtAl}.  Alon, Ben-Eliezer, Shangguan, and
Tamo~\cite{AlonEtAl} developed general constructions and probabilistic
obstructions.  Further structural and coding-theoretic viewpoints appear
in~\cite{BosekEtAl,Gadouleau,HeLi}.  For ordinary hat guessing on books,
He, Ido, and Przybocki~\cite{HeIdoPrzybocki} developed a coverability
framework and proved that for each fixed \(k\) and all sufficiently large
\(n\),
$
   \HG(B_{k,n})=1+\sum_{m=1}^k m^m.
$
Our coverability language in \cref{sec:coverability} is an adaptation of
that established framework to the proper-coloring constraint.  The new
feature is the support term in \(\abs P+\abs{\supp(P)}\), which records the
colors unavailable to every page on an escaping configuration.
 
The recent proper-coloring paper~\cite{ABBCBKMMRT} proves
\begin{equation}\label{eq:correct-comparison}
   \HGP(G)<\chi(G)\bigl(\HG(G)+1\bigr).
\end{equation}
Together with the ordinary book result, this gives
\[
  \HGP(B_{k,n})<
  (k+1)\left(2+\sum_{m=1}^k m^m\right)
\]
for all sufficiently large \(n\).  In particular, it gives \(<21\) for two
spines.  The factor placement in \cref{eq:correct-comparison} is important;
the term “\(+1\)'' lies inside the factor \(\chi(G)\).
 
In \Cref{sec:preliminaries}, we record the game notation and elementary bounds.
In \Cref{sec:coverability}, we prove the exact guarding characterization and
\cref{thm:intro-stabilization}.  In \cref{sec:two-spine}, we calculate
\(C_2\) explicitly, prove the uniform upper bound to be \(11\), and obtain the eventual matching
lower bound. In \Cref{sec:finite-cases}, we find the exact values at \(n=3,4\)
and the linear upper bound.  In \Cref{sec:affine}, we analyze the affine strategies in detail. In \cref{sec:general-k}, we give the general box bounds. All computational claims and accompanying artifacts are documented in \cref{sec:computation} and Appendix~\ref{app:orbits}.
 
\section{Preliminaries}\label{sec:preliminaries}
 
\subsection{Basics}
 
Write the spine colors as \(\vec{x}=(x_1,\dots,x_k)\) and the page colors
as \(\vec{z}=(z_1,\dots,z_n)\).  Let
\[
  \Omega_{k,q}
  =\set{\vec{x}\in[q]^k:x_i\neq x_j\text{ whenever }i\neq j}.
\]
A proper coloring is a pair
\((\vec{x},\vec{z})\in\Omega_{k,q}\times[q]^n\) satisfying
$
   z_j\notin\set{x_1,\dots,x_k}\
   \text{for}\ 1\leq j\leq n.
$
There is no restriction among the page colors.  Thus, \(B_{k,n}\) has
$
   (q)_k(q-k)^n$ proper \(q\)-colorings, where
   $(q)_k=q(q-1)\cdots(q-k+1).
$
 
Page \(j\) sees the entire spine tuple and uses a function
$
  g_j:\Omega_{k,q}\longrightarrow[q].
$
Spine \(i\) sees the other spine colors and every page color, and uses a
function of \((\vec{x}_{-i},\vec{z})\).  Values of guess functions on
inputs that cannot occur in a proper coloring are immaterial.  We may also
assume that every page guess is \emph{legal} so
$
   g_j(\vec{x})\notin\set{x_1,\dots,x_k}
$
because an illegal guess is never correct and can be replaced arbitrarily.
 
\subsection{Monotonicity}
 
\begin{lemma}[Palette monotonicity]\label{lem:q-monotone}
If a graph has a winning proper strategy with \(q\) colors, then it has one
with every \(q'\leq q\) for which proper \(q'\)-colorings exist.
\end{lemma}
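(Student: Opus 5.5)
The idea is to simulate a game with $q'$ colors inside the $q$-color game by restricting the palette. Fix a winning proper strategy $(f_v)_{v\in V}$ for $q$ colors, and let $q'\leq q$ be such that $G$ has a proper $q'$-coloring. Any proper coloring $c\colon V\to[q']$ is also a proper coloring with values in $[q]$, because $[q']\subseteq[q]$. We define new guess functions $f'_v$ for the $q'$-game by running the $q$-strategy: vertex $v$ sees $c$ on its neighborhood, which is a valid input to $f_v$, and computes $f_v$ of it. If the result lies in $[q']$, it guesses that value; otherwise it guesses an arbitrary fixed color, say $0$.

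To check that this wins, take any proper coloring $c$ with colors in $[q']$. Since $c$ is a legitimate proper $q$-coloring, the original strategy is winning on it, so some vertex $v$ has $f_v(c|_{N(v)})=c(v)$. Because $c(v)\in[q']$, this value is not modified, so $f'_v(c|_{N(v)})=c(v)$ and the team wins. The hypothesis that proper $q'$-colorings exist only ensures that the $q'$-game is nondegenerate, that is, the adversary has at least one legal move. If no such coloring exists, the notion of "winning" is vacuous or undefined, and the statement excludes that case.

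The argument has no real obstacle. The only point needing care is that the guess functions must take values in $[q']$, which is why out-of-range guesses are truncated. This truncation costs nothing, because a guess outside $[q']$ can never be correct when all colors lie in $[q']$. One should also observe that the inputs seen in the $q'$-game form a subset of the inputs seen in the $q$-game, so $f'_v$ is well defined on every input that can actually occur.
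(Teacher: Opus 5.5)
Your proof is correct and matches the paper's argument: restrict each guess function to inputs colored from \([q']\), replace out-of-range outputs arbitrarily, and observe that every proper \(q'\)-coloring is a proper \(q\)-coloring. On such a coloring, the original strategy's correct guess already lies in \([q']\), so it is left unchanged.
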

 
\begin{proof}
Restrict every guess function to visible colorings from \([q']\),\ replacing
outputs outside \([q']\) arbitrarily.  Every proper \(q'\)-coloring is also a
proper \(q\)-coloring.  On it, a correct output of the original strategy
already lies in \([q']\) so that correct output is unchanged.
\end{proof}
 
\begin{lemma}[Induced-subgraph monotonicity]\label{lem:induced-monotone}
If \(H\) is an induced subgraph of \(G\), then
\(\HGP(H)\leq\HGP(G)\).  In particular,
\(\HGP(B_{k,n})\) is non-decreasing in \(n\).
\end{lemma}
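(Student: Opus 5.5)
The plan is to build a winning strategy on \(G\) directly from a winning strategy on \(H\), with the same number of colors \(q\). Suppose the vertices of \(H\) have guess functions that win on every proper \(q\)-coloring of \(H\). In \(G\), each vertex \(v\in V(H)\) uses its \(H\)-strategy. This is possible because \(H\) is induced, so the neighborhood of \(v\) in \(H\) is exactly \(N_G(v)\cap V(H)\), and \(v\) sees all of it in \(G\). Vertex \(v\) ignores the colors of its neighbors outside \(V(H)\). Vertices of \(G\) outside \(V(H)\) guess arbitrarily, say color \(0\).

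Now take any proper \(q\)-coloring \(c\) of \(G\). Its restriction to \(V(H)\) is a proper coloring of \(H\), since every edge of \(H\) is an edge of \(G\). This step only needs \(H\) to be a subgraph, not an induced one. The \(H\)-strategy therefore produces at least one correct guess on \(c\restr{V(H)}\). By construction, that vertex guesses the same color in \(G\), so the team wins on \(c\).

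Hence every \(q\) that is winning for \(H\) is winning for \(G\). Taking \(q=\HGP(H)\) gives \(\HGP(G)\geq\HGP(H)\). I would also note that proper \(q\)-colorings of \(G\) exist whenever the strategy is needed, though this is immaterial: if none exist, the statement is vacuous or handled by convention, so no appeal to \cref{lem:q-monotone} is required.

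For the book graphs, \(B_{k,n}\) is the induced subgraph of \(B_{k,n+1}\) obtained by deleting one page. Monotonicity in \(n\) then follows.

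The only real subtlety is that induced-ness is what guarantees each vertex of \(H\) can compute its \(H\)-guess from its \(G\)-view. That is, \(v\)'s \(H\)-neighbors are a subset of its \(G\)-neighbors, which any subgraph satisfies, and no extra edges are needed. The argument in fact works for arbitrary subgraphs \(H\). I expect no step to be a genuine obstacle.
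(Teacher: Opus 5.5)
Your proposal is correct and is essentially the paper's proof: the vertices of \(H\) run their winning \(H\)-strategy while ignoring neighbors outside \(H\), the remaining vertices guess arbitrarily, and since the restriction of any proper coloring of \(G\) to \(H\) is proper, some vertex of \(H\) guesses correctly. Your side remark that induced-ness is not actually needed is also right, because \(N_H(v)\subseteq N_G(v)\) and \(E(H)\subseteq E(G)\) hold for any subgraph.
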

 
\begin{proof}
Let the vertices of \(H\) use a winning strategy while ignoring their
neighbors in \(G-H\) and let the remaining vertices guess arbitrarily.
The restriction of every proper coloring of \(G\) to \(H\) is proper so a
vertex of \(H\) is correct.
\end{proof}
 
\subsection{General union bound}
 
We will use the book-graph union bound from~\cite{ABBCBKMMRT} to begin with; the following is its short proof.
 
\begin{lemma}[Union bound]\label{lem:union-bound}
Let \(q\geq k+1\).  If
$
   (q-k-1)^n>k(q-k+1)^{n-1},
$
then \(\HGP(B_{k,n})<q\).
\end{lemma}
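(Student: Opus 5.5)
Fix any strategy with \(q\) colors: page guess functions \(g_1,\dots,g_n\) and spine functions \(h_1,\dots,h_k\). We count, over a uniformly random proper coloring, the expected number of correct guesses, and show it is less than \(1\). Then some proper coloring defeats every vertex, so the strategy loses.

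\textbf{Page guesses.} By the legality reduction in the Basics subsection, each page \(j\) guesses a color outside \(\set{x_1,\dots,x_k}\). Conditional on \(\vec x\), the color \(z_j\) is uniform on \(q-k\) values and independent of \(g_j(\vec x)\). So page \(j\) is correct with probability exactly \(1/(q-k)\), and the pages contribute \(n/(q-k)\) in expectation.

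\textbf{Spine guesses.} Fix spine \(i\) and the data it sees, \((\vec x_{-i},\vec z)\). The admissible values of \(x_i\) are the colors not in \(\set{x_l:l\neq i}\cup\set{z_1,\dots,z_n}\). Spine \(i\) can be right on at most one of them. Counting proper colorings, the number of correct events for spine \(i\) is at most the number of pairs \((\vec x_{-i},\vec z)\) that extend to at least one proper coloring. Conditioning on \(\vec x_{-i}\), which has \((q-1)_{k-1}\) choices, this is at most \((q-1)_{k-1}\cdot\#\set{\vec z\in[q]^n : z_j\notin\vec x_{-i},\ \text{some color outside }\vec x_{-i}\cup\vec z}\).

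The crude bound \((q-k+1)^n\) on the second factor is too weak. The intended sharper bound comes from making a guess and then discarding the spine. Let \(w=h_i(\vec x_{-i},\vec z)\) be the guess. The coloring is then determined by \(\vec x_{-i}\) and \(\vec z\), and each \(z_j\) lies in a set of size \(q-k+1\) that avoids \(\vec x_{-i}\). So spine \(i\) is correct on at most \((q-1)_{k-1}(q-k+1)^{n}\) colorings. To reach the stated exponent \(n-1\), refine this: the correct value \(w\) must avoid \(\vec z\), and summing over the choice of \(w\) gives \((q-1)_{k-1}\cdot(q-k+1)\cdot(q-k)^n\)-type terms. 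The cleanest route I expect to work is the complement count: the probability that spine \(i\) is correct is at most \(\mathbb E\bigl[1/A_i\bigr]\), where \(A_i\) is the number of admissible values. Then one bounds this expectation by comparing \((q-k+1)^{n-1}\) against the total.

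\textbf{Combining.} The total number of proper colorings is \((q)_k(q-k)^n\). Summing the \(k\) spine bounds and the page contribution, the failure condition becomes an inequality, and the target is to massage it into \((q-k-1)^n>k(q-k+1)^{n-1}\). Since the stated hypothesis involves \((q-k-1)^n\), the natural alternative is a direct construction of an uncovered coloring: choose \(\vec x\), then choose each page color outside \(\set{x_1,\dots,x_k,g_j(\vec x)}\), which leaves \(q-k-1\) options per page. That gives \((q-k-1)^n\) colorings, for each fixed \(\vec x\), on which every page is wrong. Among these, spine \(i\) is correct only if \(x_i=h_i(\vec x_{-i},\vec z)\).

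I will use this second route. Fix \(\vec x_{-i}\) for all spines jointly by averaging over \(\vec x\) in a suitable way. For each spine \(i\) and each \(\vec x_{-i}\), the colorings on which spine \(i\) is correct inject into pairs \((\vec x_{-i},\vec z)\) with \(z_j\) avoiding \(\vec x_{-i}\), so there are at most \((q-k+1)^n\) of them. The main obstacle is extracting the saving factor that turns exponent \(n\) into \(n-1\). For this I will count over all \(\vec x\), with \((q)_k\) choices on the good side and \((q-1)_{k-1}\) choices for \(\vec x_{-i}\) on the bad side. The ratio is \(q\), and \(q\ge q-k+1\) absorbs one factor. So if \((q)_k(q-k-1)^n>k(q-1)_{k-1}(q-k+1)^n\), some page-defeating coloring also defeats all spines. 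The analogous bookkeeping with the sharper absorption \(q-k+1\) in place of \(q\) gives exactly the stated condition.
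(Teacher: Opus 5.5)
Your final route is the paper's: count the at least \((q)_k(q-k-1)^n\) colorings on which every page is wrong, and bound each spine's successes by its number of visible inputs. The gap is in the one step that produces the exponent \(n-1\).

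The tuple \(\vec x_{-i}\) seen by spine \(i\) is an arbitrary proper \((k-1)\)-tuple from \([q]\). So it takes \((q)_{k-1}\) values, not \((q-1)_{k-1}\). Spine \(i\) can therefore be correct on as many as \((q)_{k-1}(q-k+1)^n\) colorings, and the ratio to \((q)_k\) is \(q-k+1\), not \(q\).

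With your count, the intermediate claim ``if \((q)_k(q-k-1)^n>k(q-1)_{k-1}(q-k+1)^n\), some page-defeating coloring also defeats all spines'' is actually false. At \((k,n,q)=(2,1,5)\) it reads \(40>32\). Yet \(B_{2,1}=K_3\) has a winning \(5\)-color strategy: over \(\F_5\), give each vertex an affine guess that is correct on exactly one of the three \(\AGL(1,\F_5)\)-orbits of ordered distinct triples, a different orbit for each vertex. In that strategy each spine is correct on \(20>16\) colorings, so your per-spine bound fails. Your closing sentence, invoking a ``sharper absorption \(q-k+1\) in place of \(q\),'' then asserts the stated condition without giving a reason for it.

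The repair is that no absorption inequality is involved at all. Since \((q)_k=(q)_{k-1}(q-k+1)\) exactly, and \(q\geq k+1\) makes the canceled factor positive,
\[
(q)_k(q-k-1)^n>k\,(q)_{k-1}(q-k+1)^n
\iff (q-k-1)^n>k(q-k+1)^{n-1}.
\]
Under the stated hypothesis, the page-defeating colorings therefore outnumber all possible spine successes, so some coloring defeats every player. The expectation computations in your first two paragraphs (the \(n/(q-k)\) page contribution and the \(\mathbb E[1/A_i]\) idea) are never used and should be dropped.
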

 
\begin{proof}
Fix a strategy with \(q\) colors.  A fixed spine sees a proper coloring of
a copy of \(B_{k-1,n}\).  It can be correct in at most one extension of each
visible coloring. Hence, on at most
$
  (q)_{k-1}(q-k+1)^n
$
proper colorings.  All \(k\) spines together are therefore correct on at
most \(k(q)_{k-1}(q-k+1)^n\) colorings.
 
For each fixed spine tuple, a page has at least \(q-k-1\) legal colors on
which its guess is wrong.  Hence, at least
\((q)_k(q-k-1)^n\) proper colorings make every page wrong.  If the strategy
wins, all of these must be covered by the spines and so
\[
   (q)_k(q-k-1)^n
   \leq k(q)_{k-1}(q-k+1)^n.
\]
Canceling \((q)_{k-1}(q-k+1)\) gives the contrapositive of the stated
inequality.
\end{proof}
 
For \((k,n,q)=(2,3,8)\), the inequality is
\(5^3>2\cdot7^2\).  Thus,
\begin{equation}\label{eq:b23-upper}
   \HGP(B_{2,3})\leq7.
\end{equation}
However, at \(q=7\), \(4^3<2\cdot6^2\), so a different argument is
needed.
 
\section{Coverability, escape, and stabilization}
\label{sec:coverability}
 
\subsection{Coverability criterion}
 
For \(\vec{x}\in\Omega_{k,q}\), let
\(\pi_i(\vec{x})=\vec{x}_{-i}\); this is the tuple obtained by deleting coordinate
\(i\).
 
\begin{definition}\label{def:coverable}
A set \(A\subseteq\Omega_{k,q}\) is \emph{\(i\)-saturated} if
\(\pi_i\restr A\) is injective.  A configuration
\(P\subseteq\Omega_{k,q}\) is \emph{\(k\)-coverable} if it has a partition
$
   P=P_1\mathbin{\dot\cup}\cdots\mathbin{\dot\cup}P_k
$
for which \(P_i\) is \(i\)-saturated for every \(i\).
\end{definition}
 
The interpretation is immediate. If \(P_i\) is assigned to spine \(i\),
then no two assigned tuples give that spine the same visible spine colors
while requiring different guesses.
 
\begin{lemma}[Hall's form of coverability]\label{lem:hall-coverability}
A finite configuration \(P\) is \(k\)-coverable if and only if
\begin{equation}\label{eq:hall-coverability}
   \abs Q\leq\sum_{i=1}^k\abs{\pi_i(Q)}
   \ \text{for every \(Q\subseteq P\)}.
\end{equation}
\end{lemma}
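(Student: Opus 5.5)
We will reduce coverability to a bipartite matching problem and apply Hall's marriage theorem. For each \(i\), call a class of \(\pi_i\) a fiber: a class \(\pi_i^{-1}(\vec y)\cap P\) with \(\vec y\in\pi_i(P)\). A partition \(P=P_1\mathbin{\dot\cup}\cdots\mathbin{\dot\cup}P_k\) with each \(P_i\) \(i\)-saturated is the same thing as an injective assignment of tuples to slots. Here the slots are the pairs \((i,\vec y)\) with \(\vec y\in\pi_i(P)\), and \(\vec x\) may be sent to \((i,\vec y)\) only when \(\pi_i(\vec x)=\vec y\).

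Concretely, build the bipartite graph \(H\) with left side \(P\) and right side \(S=\{(i,\vec y):1\le i\le k,\ \vec y\in\pi_i(P)\}\). Join \(\vec x\) to \((i,\pi_i(\vec x))\) for each \(i\), so every tuple has exactly \(k\) neighbours, one per coordinate.

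The two directions of the equivalence are as follows.
\begin{itemize}
\item \emph{From a partition to a matching.} Given the partition, send \(\vec x\in P_i\) to \((i,\pi_i(\vec x))\). Injectivity of \(\pi_i\) on \(P_i\) shows that distinct tuples in the same part use distinct slots. Tuples in different parts use slots with different first coordinates, so the map is a matching saturating \(P\).
\item \emph{From a matching to a partition.} Conversely, a matching of \(H\) saturating \(P\) yields \(P_i=\{\vec x:\vec x\text{ is matched to a slot with first coordinate }i\}\). These sets partition \(P\). Each \(P_i\) is \(i\)-saturated, since two tuples of \(P_i\) with the same \(\pi_i\)-image would be matched to the same slot.
\end{itemize}
So \(P\) is \(k\)-coverable if and only if \(H\) has a matching saturating \(P\).

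Next compute neighbourhoods in \(H\). For \(Q\subseteq P\),
\[
N_H(Q)=\{(i,\pi_i(\vec x)):\vec x\in Q,\ 1\le i\le k\}=\dot\bigcup_{i=1}^k\{i\}\times\pi_i(Q),
\]
so \(\abs{N_H(Q)}=\sum_i\abs{\pi_i(Q)}\). Since \(P\) is finite, \(H\) is a finite bipartite graph. Hall's theorem then says that a \(P\)-saturating matching exists exactly when \(\abs Q\le\abs{N_H(Q)}\) for all \(Q\subseteq P\), which is precisely \eqref{eq:hall-coverability}.

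No step is a genuine obstacle. The only point needing care is that the slots must be indexed by the pair \((i,\vec y)\) rather than by \(\vec y\) alone. Different \(\pi_i\) have images in different coordinate positions, and even if two such \((k-1)\)-tuples coincide as tuples they represent different spines. Tagging slots by \(i\) makes the union disjoint and gives the exact count \(\sum_i\abs{\pi_i(Q)}\).
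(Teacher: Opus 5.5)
Your proposal is correct and follows the paper's proof essentially verbatim: it uses the same bipartite graph of tuples against labeled slots \((i,\vec u)\), the same identification of \(P\)-saturating matchings with coverability partitions, and the same neighborhood count \(\sum_i\abs{\pi_i(Q)}\) fed into Hall's theorem. The only difference is that you spell out both directions of the matching--partition correspondence and the disjointness of the tagged slot sets explicitly, where the paper states them in a sentence each.
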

 
\begin{proof}
Form a bipartite incidence graph.  Its left vertices are the tuples in
\(P\), and its right vertices are the labeled slots
\((i,\vec{u})\) with \(\vec{u}\in\pi_i(P)\).  Join
\(\vec{x}\) to \((i,\pi_i(\vec{x}))\).  A matching saturating \(P\)
assigns each tuple to a distinct slot, which is exactly a coverability
partition.  For \(Q\subseteq P\), its neighborhood has size
\(\sum_i\abs{\pi_i(Q)}\), so Hall's theorem~\cite{Hall} gives precisely
\cref{eq:hall-coverability}.
\end{proof}
 
Fix page functions \(g_1,\dots,g_n\).  For a page vector
\(\vec{z}\in[q]^n\), define the \emph{target configuration}
\begin{equation}\label{eq:general-target}
 T_{\vec{z}}=
 \set{\vec{x}\in\Omega_{k,q}:
   z_j\notin\set{x_1,\dots,x_k}
   \text{ and }g_j(\vec{x})\neq z_j
   \text{ for every \(j\)}}.
\end{equation}
It consists of the proper spine tuples on which every page is wrong.
 
\begin{lemma}[Target criterion]\label{lem:target-criterion}
The fixed page functions extend to a winning strategy on \(B_{k,n}\) if
and only if \(T_{\vec{z}}\) is \(k\)-coverable for every
\(\vec{z}\in[q]^n\).
\end{lemma}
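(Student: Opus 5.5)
The plan is to fix the page vector \(\vec{z}\) and observe that the spines' task decomposes completely over page vectors. Spine \(i\) sees \((\vec{x}_{-i},\vec{z})\), so for each fixed \(\vec{z}\) its guess function is an arbitrary function \(h_i^{\vec{z}}\) of \(\vec{x}_{-i}=\pi_i(\vec{x})\) alone. Choices for different \(\vec{z}\) are independent, since \(\vec{z}\) is part of every spine's input. So the fixed page functions extend to a winning strategy if and only if, for each \(\vec{z}\in[q]^n\) separately, there are functions \(h_1,\dots,h_k\) with \(h_i\) defined on \(\pi_i(\Omega_{k,q})\) such that every proper coloring \((\vec{x},\vec{z})\) on which all pages are wrong has some spine correct, i.e. \(h_i(\pi_i(\vec{x}))=x_i\) for some \(i\).

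Next I identify exactly the colorings that must be handled by the spines. A pair \((\vec{x},\vec{z})\) is a proper coloring precisely when \(\vec{x}\in\Omega_{k,q}\) and \(z_j\notin\{x_1,\dots,x_k\}\) for all \(j\). Among these colorings, all pages are wrong exactly when \(\vec{x}\in T_{\vec{z}}\), by \cref{eq:general-target}. So the condition for fixed \(\vec{z}\) is: there exist \(h_1,\dots,h_k\) such that every \(\vec{x}\in T_{\vec{z}}\) is guessed correctly by some spine.

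The remaining step is to show that this condition is equivalent to \(k\)-coverability of \(T_{\vec{z}}\).

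\emph{Coverability gives spine functions.} Suppose \(T_{\vec{z}}=P_1\mathbin{\dot\cup}\cdots\mathbin{\dot\cup}P_k\) with each \(P_i\) \(i\)-saturated. For \(\vec{u}\in\pi_i(P_i)\), define \(h_i(\vec{u})\) to be the \(i\)-th coordinate of the unique \(\vec{x}\in P_i\) with \(\pi_i(\vec{x})=\vec{u}\); elsewhere define \(h_i\) arbitrarily. Then each \(\vec{x}\in P_i\) is guessed correctly by spine \(i\).

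\emph{Spine functions give coverability.} Conversely, given \(h_1,\dots,h_k\), assign each \(\vec{x}\in T_{\vec{z}}\) to the least \(i\) with \(h_i(\pi_i(\vec{x}))=x_i\). Suppose two tuples \(\vec{x}\neq\vec{y}\) are both assigned to \(i\) and \(\pi_i(\vec{x})=\pi_i(\vec{y})\). Then \(x_i=h_i(\pi_i(\vec{x}))=y_i\), so \(\vec{x}=\vec{y}\), a contradiction. Hence each part is \(i\)-saturated and \(T_{\vec{z}}\) is \(k\)-coverable.

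There is no real obstacle. The only point needing care is the decoupling in the first step: each spine genuinely sees all of \(\vec{z}\), and inputs \((\vec{x}_{-i},\vec{z})\) not extendable to proper colorings are immaterial. This guarantees that the per-\(\vec{z}\) choices assemble into a single well-defined strategy.
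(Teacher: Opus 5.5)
Your proposal is correct and follows essentially the same route as the paper: fix \(\vec z\), observe that each spine's guess is then a function of \(\pi_i(\vec x)\) alone, read spine guesses off the saturated parts in one direction, and assign each target tuple to a correct spine in the other. Your least-index assignment and the check that \(x_i=h_i(\pi_i(\vec x))=y_i\) forces \(\vec x=\vec y\) just spell out the saturation claim that the paper states in one line.
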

 
\begin{proof}
Fix \(\vec{z}\).  Spine \(i\)'s guess is a function of
\(\vec{x}_{-i}\), so the spine can cover an \(i\)-saturated subset and no
other subset.  If all target tuples are covered by the spines, assigning
each tuple to one correct spine yields a coverability partition.
Conversely, from a partition \(T_{\vec{z}}=\dot\bigcup_iP_i\), define
spine \(i\)'s guess on \((\vec{u},\vec{z})\) to be \(x_i\) for the
unique \(\vec{x}\in P_i\) with \(\pi_i(\vec{x})=\vec{u}\), and define
it arbitrarily when no such tuple exists.  Do this independently for each
\(\vec{z}\).  If no page is correct, the spine tuple lies in
\(T_{\vec{z}}\) and its assigned spine is correct.
\end{proof}
 
\subsection{Escape and guarding}
 
For a nonempty configuration \(P\), define
\[
  \supp(P)=
  \bigcup_{\vec{x}\in P}\set{x_1,\dots,x_k},
  \qquad
  g_j(P)=\set{g_j(\vec{x}):\vec{x}\in P}.
\]
 
\begin{definition}\label{def:guard}
Page \(j\) \emph{guards} \(P\) if
$
   [q]\setminus\supp(P)\subseteq g_j(P).
$
The family of pages guards \(P\) if at least one page guards it.
\end{definition}
 
\begin{lemma}[Escape lemma]\label{lem:escape}
For fixed page functions and a configuration \(P\), the following are
equivalent:
\begin{enumerate}
  \item \(P\subseteq T_{\vec{z}}\) for some \(\vec{z}\in[q]^n\);
  \item no page guards \(P\);
  \item \(g_j(P)\cup\supp(P)\neq[q]\) for every \(j\).
\end{enumerate}
\end{lemma}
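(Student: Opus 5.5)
We will prove the cycle (1)$\Rightarrow$(2)$\Rightarrow$(3)$\Rightarrow$(1). The equivalence of (2) and (3) is a set identity, and the real content is the construction of a page vector $\vec{z}$ in (3)$\Rightarrow$(1). Throughout, $P$ is a nonempty subset of $\Omega_{k,q}$, as in the definition of $\supp(P)$.

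\emph{(2)$\Leftrightarrow$(3).} By \cref{def:guard}, page $j$ guards $P$ exactly when $[q]\setminus\supp(P)\subseteq g_j(P)$. This holds if and only if $[q]\subseteq g_j(P)\cup\supp(P)$. Since both sets lie in $[q]$, this is the same as $g_j(P)\cup\supp(P)=[q]$. Negating and quantifying over all $j$ shows that ``no page guards $P$'' is precisely (3).

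\emph{(1)$\Rightarrow$(3).} Suppose $P\subseteq T_{\vec{z}}$ and fix a page $j$. By \cref{eq:general-target}, every $\vec{x}\in P$ satisfies $z_j\notin\set{x_1,\dots,x_k}$ and $g_j(\vec{x})\neq z_j$. Ranging over all $\vec{x}\in P$ gives $z_j\notin\supp(P)$ and $z_j\notin g_j(P)$. Hence $z_j\in[q]\setminus\bigl(g_j(P)\cup\supp(P)\bigr)$, so $g_j(P)\cup\supp(P)\neq[q]$.

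\emph{(3)$\Rightarrow$(1).} For each $j$, choose $z_j\in[q]\setminus\bigl(g_j(P)\cup\supp(P)\bigr)$, which is nonempty by hypothesis. The choices are independent across pages, since \cref{eq:general-target} places no constraint linking different page colors. Now take any $\vec{x}\in P$. It lies in $\Omega_{k,q}$ because $P$ is a configuration of proper tuples. For every $j$ we have $z_j\notin\supp(P)\supseteq\set{x_1,\dots,x_k}$, and $g_j(\vec{x})\in g_j(P)$ while $z_j\notin g_j(P)$, so $g_j(\vec{x})\neq z_j$. Thus $\vec{x}\in T_{\vec{z}}$, and therefore $P\subseteq T_{\vec{z}}$.

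No step is a serious obstacle. The only point needing care is that one $\vec{z}$ must work simultaneously for every tuple of $P$. This is why the support term $\supp(P)$, rather than the colors of a single tuple, appears in the guarding condition: it is exactly the set of colors forbidden to a page uniformly over all of $P$.
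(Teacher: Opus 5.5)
Your proof is correct and follows the paper's argument. In both, (2)$\Leftrightarrow$(3) comes directly from the definition of guarding; (1)$\Rightarrow$(3) reads off $z_j\notin g_j(P)\cup\supp(P)$ from $P\subseteq T_{\vec z}$; and (3)$\Rightarrow$(1) chooses each $z_j$ independently in the complement of $g_j(P)\cup\supp(P)$, with your remark that each $\vec x\in P$ already lies in $\Omega_{k,q}$ making explicit a detail the paper leaves implicit.
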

 
\begin{proof}
The last two statements are equivalent by definition.  Suppose they hold then
for each page \(j\), choose
$
   z_j\in[q]\setminus\bigl(g_j(P)\cup\supp(P)\bigr).
$
Then \(z_j\) avoids every coordinate of every tuple in \(P\), and it differs
from page \(j\)'s guess on every tuple in \(P\).  Hence,
\(P\subseteq T_{\vec{z}}\).
 
Conversely, if \(P\subseteq T_{\vec{z}}\), then for every \(j\) the color
\(z_j\) belongs neither to \(g_j(P)\) nor to \(\supp(P)\).  Thus,
\(g_j(P)\cup\supp(P)\neq[q]\).
\end{proof}
 
\begin{theorem}[Exact guarding characterization]\label{thm:guard-characterization}
Fixed page functions on \(B_{k,n}\) can be completed by spine functions to
a winning \(q\)-color strategy if and only if they guard every non-coverable
configuration \(P\subseteq\Omega_{k,q}\).
\end{theorem}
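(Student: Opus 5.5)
The plan is to combine the target criterion (\cref{lem:target-criterion}) with the escape lemma (\cref{lem:escape}). Two easy facts glue them together. First, coverability is hereditary: if \(P\) is \(k\)-coverable with partition \(P=\dot\bigcup_i P_i\), then for any \(Q\subseteq P\) the sets \(Q\cap P_i\) are \(i\)-saturated. This is immediate from \cref{def:coverable}, since a restriction of an injective map is injective; alternatively it follows from the Hall form \cref{lem:hall-coverability}, because every subset of \(Q\) is a subset of \(P\). Second, since \(\Omega_{k,q}\) is finite, each target configuration \(T_{\vec z}\) is a finite configuration. Hence the guarding condition applies to it, provided \(T_{\vec z}\neq\emptyset\); the empty configuration is trivially coverable.

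For the direction ``guarding implies a winning completion'', fix page functions that guard every non-coverable \(P\). By \cref{lem:target-criterion} it suffices to show that every \(T_{\vec z}\) is \(k\)-coverable. Suppose some \(T_{\vec z}\) is not, and put \(P=T_{\vec z}\). Trivially \(P\subseteq T_{\vec z}\), so condition (1) of \cref{lem:escape} holds. That lemma then says no page guards \(P\), which contradicts the hypothesis that \(P\) is non-coverable and therefore guarded.

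For the converse, assume the page functions extend to a winning strategy, so by \cref{lem:target-criterion} every \(T_{\vec z}\) is \(k\)-coverable. Let \(P\subseteq\Omega_{k,q}\) be non-coverable, and suppose for contradiction that no page guards it. Then \cref{lem:escape} produces \(\vec z\) with \(P\subseteq T_{\vec z}\). By heredity, \(P\) is coverable as a subset of the coverable set \(T_{\vec z}\), which is a contradiction. Therefore some page guards \(P\).

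There is no substantial obstacle; the only point needing care is bookkeeping. Guarding is defined only for nonempty \(P\) because \(\supp\) is defined there, but every non-coverable \(P\) is nonempty, so the condition is never vacuous. The heredity remark is the one ingredient not stated verbatim earlier, and I would record it explicitly as a one-line observation before running the two directions.
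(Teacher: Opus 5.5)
Your proposal is correct and is essentially the paper's proof: each direction combines the target criterion with the escape lemma in the same way. You apply the escape lemma to \(P=T_{\vec{z}}\) in one direction and to an unguarded non-coverable \(P\) in the other, and the heredity of coverability that you state explicitly is exactly what the paper uses tacitly when it says a target containing a non-coverable \(P\) is itself non-coverable.
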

 
\begin{proof}
If an unguarded non-coverable \(P\) exists, \cref{lem:escape} places it
inside some target \(T_{\vec{z}}\).  That target is non-coverable, so
\cref{lem:target-criterion} rules out a winning completion.
 
Conversely, if the page functions admit no winning completion, then some
target \(T_{\vec{z}}\) is non-coverable by
\cref{lem:target-criterion}.  It is itself a configuration, and
\cref{lem:escape} shows that no page guards it.
\end{proof}
 
\subsection{Coverability constant}
 
The definition of \(C_k\) in \cref{eq:Ck-intro} ranges over configurations
whose colors may be drawn from any finite set; relabeling colors does not
change coverability.  The minimum exists.  Indeed, take pairwise disjoint
sets \(S_1,\dots,S_k\), each of size \(k+1\), and put
\(P=S_1\times\cdots\times S_k\).  An \(i\)-saturated subset of \(P\) has
size at most \((k+1)^{k-1}\), so \(k\) such subsets cannot cover all
\((k+1)^k\) tuples.  Thus, non-coverable finite configurations exist.
Also, every proper \(k\)-tuple is coverable by itself, so a non-coverable
configuration has at least two tuples and at least \(k\) support colors.
In particular, \(C_k\geq k+2\).
 
\begin{observation}\label{obs:single-guard}
A configuration \(P\subseteq\Omega_{k,q}\) can be guarded by one suitably
chosen legal page function if and only if
$
   \abs P+\abs{\supp(P)}\geq q.
$
\end{observation}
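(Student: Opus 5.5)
The statement is about a single page function \(g\) that is \emph{legal}, meaning \(g(\vec{x})\notin\set{x_1,\dots,x_k}\) for every \(\vec{x}\). Guarding \(P\) means \([q]\setminus\supp(P)\subseteq g(P)\). The plan is a counting argument for necessity and an explicit construction for sufficiency.

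\emph{Necessity.} The set \(g(P)\) has at most \(\abs P\) elements. If \(g\) guards \(P\), then \([q]\setminus\supp(P)\) is contained in \(g(P)\), so
\[
q-\abs{\supp(P)}\leq\abs{g(P)}\leq\abs P .
\]
This is exactly \(\abs P+\abs{\supp(P)}\geq q\). Legality is not needed in this direction.

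\emph{Sufficiency.} Write \(R=[q]\setminus\supp(P)\), so the hypothesis says \(\abs R\leq\abs P\). Since \(\abs R\leq\abs P\), there is a surjection from \(P\) onto \(R\) when \(R\neq\emptyset\); fix one and let it define \(g\) on \(P\).
\begin{itemize}
  \item Each value \(g(\vec{x})\) lies in \(R\), so it avoids every coordinate of every tuple of \(P\). In particular \(g(\vec{x})\notin\set{x_1,\dots,x_k}\), so \(g\) is legal on \(P\).
  \item Surjectivity onto \(R\) gives \(R\subseteq g(P)\), so \(g\) guards \(P\).
\end{itemize}
If \(R=\emptyset\), every page guards \(P\) trivially, and we define \(g\) on \(P\) by any legal values. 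Legal values exist because \(q\geq k+1\) whenever \(B_{k,n}\) has proper colorings.

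\emph{Extension to \(\Omega_{k,q}\).} For \(\vec{x}\notin P\), set \(g(\vec{x})\) to any color outside \(\set{x_1,\dots,x_k}\); again such a color exists since \(q>k\). The extended \(g\) is legal on all of \(\Omega_{k,q}\) and still guards \(P\), since guarding depends only on \(g(P)\).

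There is no substantive obstacle. The only points needing care are the degenerate case \(R=\emptyset\) and the standing assumption \(q\geq k+1\), which is needed so that legal values exist at all.
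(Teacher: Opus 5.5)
Your proof is correct and follows the paper's argument: the bound \(\abs{g(P)}\leq\abs P\) gives necessity, and for sufficiency you map \(P\) onto the colors outside \(\supp(P)\), which are legal at every tuple of \(P\). You also fill in details the paper leaves implicit: the legal extension of \(g\) off \(P\), the degenerate case \(\supp(P)=[q]\), and the standing assumption \(q\geq k+1\).
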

 
\begin{proof}
A guard must realize all \(q-\abs{\supp(P)}\) colors outside the support
among its \(\abs P\) values.  This is possible exactly under the displayed
inequality.  Every color outside \(\supp(P)\) is a legal guess at every
tuple in \(P\), so there is no additional obstruction.
\end{proof}
 
\begin{theorem}[Coverability constant]\label{thm:Ck}
For every \(k\geq2\),
\[
  \sup_{n\geq1}\HGP(B_{k,n})=C_k.
\]
More explicitly, no \(B_{k,n}\) has a winning strategy with \(C_k+1\)
colors, while \(B_{k,n}\) has a winning strategy with \(C_k\) colors for
all sufficiently large \(n\).
\end{theorem}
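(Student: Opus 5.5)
Two directions. For the upper bound, fix $q=C_k+1$ and any $n$. Take a non-coverable configuration $P$ with $\abs P+\abs{\supp(P)}=C_k$, relabeled so that its colors lie in $[q]$. This is possible since $\abs{\supp(P)}\le C_k<q$, and relabeling preserves coverability. For each page $j$, $\abs{g_j(P)\cup\supp(P)}\le\abs P+\abs{\supp(P)}=C_k<q$. Hence by \cref{lem:escape} no page guards $P$, and \cref{thm:guard-characterization} rules out any winning completion. So $\HGP(B_{k,n})\le C_k$ for all $n$. Combined with \cref{lem:q-monotone}, no larger $q$ works either.

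For the lower bound, set $q=C_k$. By \cref{thm:guard-characterization} it suffices to find, for large $n$, page functions guarding every non-coverable $P\subseteq\Omega_{k,q}$. Two reductions come first.

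\begin{enumerate}
\item It suffices to guard the \emph{minimal} non-coverable configurations. If $P'\subseteq P$ is guarded by page $j$, then $[q]\setminus\supp(P)\subseteq[q]\setminus\supp(P')\subseteq g_j(P')\subseteq g_j(P)$, so $P$ is guarded too. Non-coverability is inherited upward, by \cref{lem:hall-coverability}.
\item Every minimal non-coverable $P$ satisfies $\abs P+\abs{\supp(P)}\ge C_k=q$ by definition of $C_k$. So by \cref{obs:single-guard} each one individually admits a guarding legal page function.
\end{enumerate}

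There are finitely many such $P$, since $\Omega_{k,q}$ is finite. The naive plan is to dedicate one page to each minimal non-coverable configuration. Page $j_P$ is defined on the tuples of $P$ so that $g_{j_P}(P)\supseteq[q]\setminus\supp(P)$, using $\abs P\ge q-\abs{\supp(P)}$ distinct tuples to hit the missing colors, all legal. It is arbitrary (legal) elsewhere.

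The only subtlety is that a page function is a single function on $\Omega_{k,q}$. Dedicating separate pages to separate configurations avoids any conflict. The number $N_k$ of minimal non-coverable configurations in $\Omega_{k,C_k}$ is finite, so $n\ge N_k$ suffices, with extra pages guessing arbitrarily. Then \cref{lem:induced-monotone} gives $\HGP(B_{k,n})\ge C_k$ for all $n\ge N_k$. With the upper bound this yields $\HGP(B_{k,n})=C_k$ for $n\ge N_k$, and hence $\sup_n\HGP(B_{k,n})=C_k$.

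The main obstacle is essentially bookkeeping. One must check that $q=C_k$ admits proper colorings, which holds since $C_k\ge k+2$. One must also check that legality of the dedicated guesses is automatic, since colors outside $\supp(P)$ avoid every tuple of $P$. No probabilistic argument is needed for mere existence of $N_k$; a random construction would only be needed for an explicit threshold as in \cref{thm:intro-two-spines}.
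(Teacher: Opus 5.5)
Your proof is correct and matches the paper's. The upper bound uses the same count $\abs{g_j(P)\cup\supp(P)}\le C_k<q$, fed through the escape lemma, the guarding characterization and palette monotonicity. The lower bound is the same one-dedicated-page-per-configuration construction via \cref{obs:single-guard}. The only difference is your restriction to minimal non-coverable configurations, which is a harmless refinement: the paper simply dedicates a page to every non-coverable $P\subseteq\Omega_{k,q}$. Your version needs only your upward-guarding observation and the trivial fact that every finite non-coverable set contains a minimal one; it does not need upward inheritance of non-coverability, which is what you cited.
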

 
\begin{proof}
Choose a non-coverable configuration \(P\) with
\(\abs P+\abs{\supp(P)}=C_k\). Relabel its support inside a palette of
size \(q=C_k+1\).  Every page takes at most \(\abs P\) values on \(P\), but
guarding \(P\) would require all
$
 q-\abs{\supp(P)}=\abs P+1
$
colors outside its support.  Hence, no page can guard \(P\) regardless of
how many pages there are.  By \cref{thm:guard-characterization}, no
\(B_{k,n}\) has a winning \(q\)-color strategy.  Palette monotonicity gives
\(\HGP(B_{k,n})\leq C_k\).
 
Now set \(q=C_k\).  There are finitely many configurations in
\(\Omega_{k,q}\).  For each non-coverable configuration \(P\), the
definition of \(C_k\) gives
\(\abs P+\abs{\supp(P)}\geq q\). By
\cref{obs:single-guard}, choose one page function \(g_P\) that guards
\(P\).  Taking one dedicated page for every non-coverable \(P\) yields a
finite family that guards them all.  The guarding characterization extends
this family to a winning strategy.  Adding arbitrary further pages
preserves the guards, so the strategy exists for every sufficiently large
\(n\).  Together with the upper bound, this proves the result.
\end{proof}
 
\begin{remark}\label{rem:Nk-crude}
The proof gives the crude bound
$
   N_k\leq 2^{(C_k)_k}
$
since \(\Omega_{k,C_k}\) contains \((C_k)_k\) tuples.  The point of
\cref{thm:Ck} is structural, meaning stabilization follows from a finite invariant. We do not attempt to optimize in this paper, but we note that sharper thresholds require more economical families of guards.
\end{remark}
 
\section{Asymptotic value for two-spine}\label{sec:two-spine}
 
\subsection{Pseudoforests}
 
For \(k=2\), consider a configuration \(P\) as a bipartite graph: an ordered
pair \((x,y)\) is an edge from the row copy of \(x\) to the column copy of
\(y\).  Row \(c\) and column \(c\) are distinct vertices, and the diagonal
edge \((c,c)\) is forbidden.
 
\begin{lemma}[Pseudoforest criterion]\label{lem:pseudoforest}
A two-spine configuration is coverable if and only if it is a
pseudoforest. Equivalently, every connected component contains at most one
cycle.
\end{lemma}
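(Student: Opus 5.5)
For \(k=2\) I will translate \cref{def:coverable} directly into graph language. Each tuple \((x,y)\in P\) is an edge between row vertex \(x\) and column vertex \(y\). Then \(\pi_1(x,y)=y\) and \(\pi_2(x,y)=x\). A set \(P_1\) is \(1\)-saturated exactly when no two of its edges share a column endpoint, and \(P_2\) is \(2\)-saturated exactly when no two of its edges share a row endpoint. So coverability becomes an orientation condition. Orient each edge of \(P_1\) toward its column endpoint and each edge of \(P_2\) toward its row endpoint. Then \(P\) is coverable if and only if the bipartite graph admits an orientation in which every vertex has in-degree at most \(1\). The proof therefore reduces to the classical fact that a (multi)graph has an orientation with all in-degrees at most one if and only if it is a pseudoforest. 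Our graph is simple, since \(P\) is a set.

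For the forward direction, I will take a component \(H\) with \(v_H\) vertices and \(e_H\) edges. An orientation with in-degree at most one gives \(e_H=\sum_{v\in H}\mathrm{indeg}(v)\leq v_H\). A connected graph with \(e_H\leq v_H\) has cyclomatic number \(e_H-v_H+1\leq1\), so it contains at most one cycle. Alternatively, I could use \cref{lem:hall-coverability}: taking \(Q\) to be the edge set of a component gives \(\abs Q\leq\abs{\pi_1(Q)}+\abs{\pi_2(Q)}\), which is the number of vertices of that component. So non-pseudoforests fail Hall's condition, which is the same computation.

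For the converse, I will handle each component with at most one cycle separately. If the component is a tree, I root it anywhere and orient every edge away from the root, so each non-root vertex has in-degree exactly one. If it is unicyclic, I orient the cycle cyclically and then orient every remaining edge away from the cycle. Deleting the cycle edges leaves trees, each attached to exactly one cycle vertex, so every vertex again receives in-degree at most one. Finally I translate back: edges oriented into their column endpoint form \(P_1\), and edges oriented into their row endpoint form \(P_2\).

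The only point needing care is the dictionary between saturation and in-degree, in particular which projection corresponds to which endpoint. The diagonal exclusion and the distinction between the row and column copies of a color play no role beyond making the graph simple and bipartite. I expect no genuine obstacle; the main risk is mislabeling \(\pi_1\) versus \(\pi_2\), so I will state that correspondence explicitly.
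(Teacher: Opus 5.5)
Your proof is correct. Its first step is the paper's: for \(k=2\), coverability means assigning each edge injectively to one of its endpoints, with \(P_1\) claiming columns and \(P_2\) claiming rows, which is your in-degree-at-most-one orientation. You differ in how you prove the resulting graph fact. The paper applies Hall's theorem to get \(\abs{E(H)}\leq\abs{V(H)}\) for every subgraph \(H\), then states without detail that this is equivalent to every component having cyclomatic number at most one. You prove necessity by the in-degree count on each component, which is the easy half of Hall, as your remark via \cref{lem:hall-coverability} shows. You prove sufficiency by an explicit orientation: cyclic around the unique cycle, outward along the attached trees. Your route is self-contained and only examines whole components, so the passage from arbitrary subgraphs to components never comes up. It also produces a concrete partition \(P_1\mathbin{\dot\cup}P_2\), the object actually used to build spine functions, as in the \(B_{2,3}\) decompositions. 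The paper's route is shorter and matches the general-\(k\) Hall criterion.
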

 
\begin{proof}
Assigning an edge to spine \(1\) permits at most one assigned edge per
column, while assigning it to spine \(2\) permits at most one per row.  Thus,
coverability is equivalent to assigning every edge injectively to one of
its endpoints.  By Hall's theorem~\cite{Hall}, this is possible exactly when
\(\abs{E(H)}\leq\abs{V(H)}\) for every subgraph \(H\).  That condition is
equivalent to every component having a cyclomatic number of at most one.
\end{proof}

A \emph{bicyclic configuration} is a connected two-spine configuration
\(P\) with \(e(P)=v(P)+1\).  Every non-pseudoforest configuration contains a bicyclic
subconfiguration. Indeed, some connected component has at least two
edges outside any spanning tree. A spanning tree of that component,
together with any two such edges, is connected and has
$
    (v(P)-1)+2=v(P)+1
$
edges. Consequently, in the definition of \(C_2\), it suffices to
minimize over bicyclic configurations.
 
Denote \(R(P)\) and \(K(P)\) for the row-color and column-color sets respectively,
and let
\[
e(P)=\abs P,\
v(P)=\abs{R(P)}+\abs{K(P)},\ \text{and}\
s(P)=\abs{\supp(P)}=\abs{R(P)\cup K(P)}.
\]
 
\begin{lemma}[Extremal configurations]\label{lem:extremal}
Among all bicyclic configurations,
\[
   \min_P\bigl(e(P)+s(P)\bigr)=11.
\]
Up to relabeling colors and interchanging rows with columns, equality occurs
in exactly two ways:
\begin{enumerate}
  \item \(P=R\times K\cong K_{2,3}\), where
        \(\abs R=2\), \(\abs K=3\), and \(R\cap K=\varnothing\).
  \item \(R=\{a,b,x\}\), \(K=\{a,b,y\}\), where
        \(x\neq y\) and \(x,y\notin\{a,b\}\), and $P=(R\times K) \setminus\{(a,a),(b,b)\}.$
\end{enumerate}
\end{lemma}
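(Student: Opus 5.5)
Throughout, let \(r=\abs{R(P)}\), \(c=\abs{K(P)}\) and \(t=\abs{R(P)\cap K(P)}\). The plan is to rewrite \(e(P)+s(P)\) in terms of \(r,c,t\), bound it below using the forbidden diagonal, and then finish with a short case analysis on \(r\) and \(c\). By interchanging rows with columns we may assume \(r\le c\).

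\textbf{Rewriting the objective.} For a bicyclic configuration, \(v=r+c\), \(e=v+1\) and \(s=\abs{R\cup K}=v-t\). Hence
\[
   e(P)+s(P)=2v+1-t ,
\]
and the claim becomes \(2v-t\geq10\), together with a description of the equality cases.

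\textbf{The two constraints.} First, every edge lies in \(R\times K\) and avoids the diagonal. Each of the \(t\) common colors kills exactly one cell \((c,c)\), so
\[
   v+1=e\leq rc-t .
\]
Second, \(t\leq\min(r,c)=r\). Also \(r\geq2\): if \(r=1\), the bipartite graph is a star, which is acyclic and cannot have \(e=v+1\).

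\textbf{Case analysis.}
\begin{itemize}
  \item \(r=2\). The edge bound reads \(c+3\leq 2c-t\), i.e.\ \(c\geq t+3\). Then \(2v-t=4+2c-t\geq 4+c+3\geq10\), using \(c\geq3\). Equality forces \(c=3\), \(t=0\) and \(e=6=rc\). So \(P=R\times K\) with \(R\cap K=\varnothing\), \(\abs R=2\), \(\abs K=3\); this is type~(1).
  \item \(r=c=3\). Here \(v=6\), and the edge bound \(7\leq 9-t\) gives \(t\leq2\). Then \(2v-t=12-t\geq10\). Equality forces \(t=2\) and \(e=7=9-2\), so \(P\) consists of all non-diagonal cells. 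Writing \(R\cap K=\{a,b\}\), the remaining colors \(x\in R\) and \(y\in K\) satisfy \(x\neq y\), since otherwise \(t=3\). This is exactly type~(2).
  \item \(r=3\), \(c\geq4\). Then \(v\geq7\) and \(2v-t\geq14-3=11\), so this case is strict.
  \item \(r\geq4\). Then \(v\geq8\) and \(t\leq r\leq v/2\), so \(2v-t\geq\tfrac32 v\geq12\), also strict.
\end{itemize}

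\textbf{Attainment.} It remains to check that both listed configurations really are bicyclic, i.e.\ connected with \(e=v+1\). For \(K_{2,3}\): \(e=6\), \(v=5\), and it is connected. For type~(2): \(e=7\), \(v=6\); deleting the two diagonal cells from \(K_{3,3}\) leaves the row vertex \(a\) adjacent to the columns \(b,y\), and similarly for the others, so it is connected. Both therefore give \(e+s=11\), so the minimum equals \(11\).

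\textbf{Main obstacle.} The only delicate point is the uniqueness part of the \(r=2\) case. One must rule out, for example, \(c=4\) with \(t=2\), which satisfies \(2c-t=6\) but violates \(c\geq t+3\). This is handled by combining the two inequalities \(c\geq t+3\) and \(2c-t=6\) simultaneously rather than separately.
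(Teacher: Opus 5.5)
Your proof is correct and uses the same two ingredients as the paper: the identity $e+s=2v+1-t$ (the paper's $2s+t+1$, since $v=s+t$) and the diagonal capacity bound $v+1=e\leq rc-t$. The only difference is organizational: you split cases on the smaller side $r$ (assuming $r\leq c$ and using $t\leq r$), whereas the paper splits on the support size $s$ and then on $t$, and both routes reach the same two equality cases.
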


\begin{proof}
Let \(r=\abs R\), \(c=\abs K\), and \(t=\abs{R\cap K}\).  Then
$
 v=r+c=s+t\ \text{and}\ e=v+1,
$
so
\begin{equation}\label{eq:es-formula}
   e+s=2s+t+1.
\end{equation}
There are at most \(rc-t\) legal edges between \(R\) and \(K\), because
the \(t\) diagonal pairs indexed by \(R\cap K\) are forbidden. Therefore, a necessary
condition for a bicyclic configuration is
\begin{equation}\label{eq:capacity-bicyclic}
  rc-t\geq r+c+1,\
  \text{or equivalently}\
  (r-1)(c-1)\geq t+2.
\end{equation}
 
We first rule out \(s\leq3\).  If \(t=0\), then
\(r+c=s\leq3\), so \((r-1)(c-1)\leq0<2\).  If \(t=1\), then
\(r+c=s+1\leq4\), so \((r-1)(c-1)\leq1<3\).  If \(t=2\), then
\(r+c\leq5\) with \(r,c\geq2\), so
\((r-1)(c-1)\leq2<4\).  Finally, \(t=3\) forces
\(r=c=s=3\), giving \(4<5\).  Each case contradicts
\cref{eq:capacity-bicyclic}.
 
Suppose \(s=4\).  When \(t=0\), we have \(r+c=4\), and the left side of
\cref{eq:capacity-bicyclic} is at most \(1<2\).  When \(t=1\), we have
\(r+c=5\), and it is at most \(2<3\).  Hence, \(t\geq2\) and so
\cref{eq:es-formula} gives \(e+s\geq11\).  Equality forces \(t=2\) and
\(r+c=6\).  The splits \((2,4)\) and \((4,2)\) violate
\cref{eq:capacity-bicyclic}; hence, \(r=c=3\).  Exactly
\(3\cdot3-2=7\) legal pairs are available, while \(e=7\), so all legal
pairs must occur.  This is configuration~(2).
 
If \(s\geq5\), \cref{eq:es-formula} gives \(e+s\geq11\).  Equality forces
\(s=5\) and \(t=0\).  Now \(r+c=5\), \(e=6\), and only the split
\((r,c)=(2,3)\), up to order, supplies six edges.  All six pairs must be
present, giving configuration~(1).  Both displayed configurations are
connected and bicyclic, so the bound and classification are sharp.
\end{proof}
 
\begin{corollary}\label{cor:C2}
\(C_2=11\).  Hence, \(\HGP(B_{2,n})\leq11\) for every \(n\), with equality
for all sufficiently large \(n\).
\end{corollary}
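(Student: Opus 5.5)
The plan is to assemble the corollary from three earlier results: the reduction of non-coverability to bicyclic subconfigurations, the extremal computation in \cref{lem:extremal}, and the stabilization theorem \cref{thm:Ck}.

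First I show \(C_2\geq 11\). Let \(P\) be any finite non-coverable two-spine configuration. By \cref{lem:pseudoforest}, \(P\) is not a pseudoforest. By the remark preceding \cref{lem:extremal}, \(P\) therefore contains a bicyclic subconfiguration \(P'\subseteq P\). The quantity \(e+s\) is monotone under taking subconfigurations: \(\abs{P'}\leq\abs P\) and \(\supp(P')\subseteq\supp(P)\). Hence
\[
\abs P+\abs{\supp(P)}\geq e(P')+s(P')\geq 11,
\]
where the last inequality is \cref{lem:extremal}.

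Next I show \(C_2\leq 11\). Take configuration~(1) of \cref{lem:extremal}, namely \(R\times K\) with \(\abs R=2\), \(\abs K=3\) and \(R\cap K=\varnothing\). It has \(6\) edges on \(5\) vertices and is connected. Its cyclomatic number is therefore \(2\), so it is not a pseudoforest. By \cref{lem:pseudoforest} it is non-coverable, and it attains \(e+s=6+5=11\). Together with the previous step, this gives \(C_2=11\).

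Finally, \cref{thm:Ck} applied with \(k=2\) gives \(\HGP(B_{2,n})\leq C_2=11\) for every \(n\geq1\). It also gives a winning \(11\)-color strategy for all sufficiently large \(n\), so equality holds from some threshold on; monotonicity in \(n\) (\cref{lem:induced-monotone}) is consistent with this but not needed.

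No step is a genuine obstacle, since everything is already in place. The only point that needs care is the monotonicity of \(\abs P+\abs{\supp(P)}\) under passing to the bicyclic subconfiguration. It is this monotonicity that lets the minimum over all non-coverable configurations be reduced to the minimum over bicyclic ones computed in \cref{lem:extremal}.
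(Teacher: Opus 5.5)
Your proposal is correct and follows the same route as the paper. Both arguments pass from a non-coverable configuration to a bicyclic subconfiguration with no larger \(e+s\), invoke \cref{lem:extremal} for the value \(11\), note that the extremal configuration is itself non-coverable, and finish with \cref{thm:Ck}. The only difference is cosmetic: you check directly that the \(K_{2,3}\) configuration has cyclomatic number \(2\), whereas the paper simply observes that every bicyclic configuration is non-coverable.
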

 
\begin{proof}
Every non-coverable configuration contains a bicyclic subconfiguration,
which cannot have larger \(e+s\). Conversely, every bicyclic configuration
is non-coverable.  Thus, \cref{lem:extremal} computes \(C_2\), and
\cref{thm:Ck} finishes the proof.
\end{proof}
 
\begin{remark}[Obstruction at twelve colors]\label{rem:direct12}
The first extremal configuration makes the uniform upper bound transparent.
Choose five distinct colors \(r_1,r_2,c_1,c_2,c_3\) and the six pairs
\(P=\{r_1,r_2\}\times\{c_1,c_2,c_3\}\).  At \(q=12\), a page has at most
six values on \(P\), but seven colors lie outside its support, so no page
can guard \(P\).  The escape lemma then produces a page vector on which
this \(K_{2,3}\) survives.  Its six edges cannot be covered by two spines,
whose row and column capacities total only \(2+3=5\).
\end{remark}
 
\subsection{Stabilization estimate}
 
\begin{theorem}\label{thm:n0}
There is a winning \(11\)-color strategy on \(B_{2,n}\) whenever
\[
  n\geq
  \left\lceil110(\log2)\,9^7\right\rceil
  =364{,}683{,}163 \approx 4 \times 10^8.
\]
\end{theorem}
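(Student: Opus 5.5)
The plan is a first-moment (random page functions) argument that combines \cref{thm:guard-characterization} with \cref{lem:extremal}. Set \(q=11\) and let each page choose its function independently and uniformly at random among \emph{legal} page functions. That is, for every \(\vec{x}\in\Omega_{2,11}\), page \(j\) independently picks \(g_j(\vec{x})\) uniformly from the \(9\) colors outside \(\{x_1,x_2\}\). By \cref{thm:guard-characterization}, it suffices to show that with positive probability every non-coverable \(P\subseteq\Omega_{2,11}\) is guarded by at least one page.

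\emph{Step 1: a uniform lower bound on the probability that one page guards a fixed \(P\).} Fix a non-coverable \(P\). By \cref{lem:pseudoforest} it contains a bicyclic subconfiguration. By the first part of the proof of \cref{lem:extremal}, every bicyclic configuration has \(s\geq4\), so \(s(P)\geq4\) as well. Let \(m=11-s(P)\leq7\). By the definition of \(C_2=11\) we have \(e(P)\geq m\). If \(m\leq0\), guarding is automatic. Otherwise, choose \(m\) distinct tuples of \(P\) and an ordering of the \(m\) colors outside \(\supp(P)\). Every such color is a legal guess at every tuple of \(P\). Hence the event that the chosen tuples receive exactly these colors in order has probability \(9^{-m}\geq9^{-7}\), and on this event the page guards \(P\). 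So a single page fails to guard \(P\) with probability at most \(1-9^{-7}\). Since pages are independent, the probability that no page guards \(P\) is at most \((1-9^{-7})^n\leq\exp(-n/9^7)\).

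\emph{Step 2: union bound.} Since \(\abs{\Omega_{2,11}}=11\cdot10=110\), there are fewer than \(2^{110}\) configurations. Therefore the probability that some non-coverable configuration is unguarded is less than
\[
2^{110}\exp(-n/9^7),
\]
and this quantity is at most \(1\) precisely when \(n\geq110(\log2)\,9^7\). Because the bound \(2^{110}\) on the number of configurations is strict, it suffices to have \(n\geq\lceil110(\log2)9^7\rceil\). For such \(n\) some choice of page functions guards every non-coverable configuration, and \cref{thm:guard-characterization} completes it to a winning strategy.

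The only delicate point is Step 1. We need the failure probability to be bounded by \(1-9^{-7}\) uniformly over \emph{all} non-coverable configurations, not just the extremal ones. This is why we use \(s\geq4\), inherited from a bicyclic subconfiguration, to cap \(m\) at \(7\), together with \(e+s\geq C_2\) to guarantee enough tuples. Once that uniform bound is in hand, the rest is routine bookkeeping: we only need to check that \(e^{-n/9^7}\) and the crude count \(2^{110}\) combine to exactly the stated threshold.
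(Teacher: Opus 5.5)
Your proposal is correct and follows the paper's proof essentially verbatim: random legal page functions, a single-page guarding probability of at least \(9^{-m}\geq 9^{-7}\) coming from \(s\geq4\) and \(e\geq 11-s\), and a union bound over at most \(2^{110}\) configurations. The only cosmetic differences are that you union-bound over all non-coverable configurations, using \(e+s\geq C_2\) directly, where the paper uses only bicyclic ones. You also get the strict inequality from the strict count of non-coverable configurations, whereas the paper gets it from the condition \(N>110(\log 2)\,9^7\).
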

 
\begin{proof}
For each page, independently choose every legal value \(g(x,y)\) uniformly
from the nine colors in \([11]\setminus\{x,y\}\), independently over the
\(110\) ordered pairs \(x\neq y\).
 
Fix a bicyclic configuration \(P\), set \(s=s(P)\), and let
\(m=11-s\) be the number of colors outside its support.  By
\cref{lem:extremal}, \(s\geq4\), so \(0\leq m\leq7\), and
\(e(P)\geq m\).  If \(m=0\), every page guards \(P\).  Otherwise, choose
\(m\) distinct edges of \(P\) and assign the \(m\) missing colors to them
in a fixed order.  Each missing color is legal at every edge of \(P\) so
one random page realizes this assignment and therefore guards \(P\) with
probability \(9^{-m}\geq9^{-7}\).
 
There are at most \(2^{110}\) configurations on the \(110\) legal ordered
pairs.  With \(N\) independent pages, the union bound gives
\[
 \Pr(\text{some bicyclic \(P\) is unguarded})
 \leq 2^{110}(1-9^{-7})^N
 \leq 2^{110}e^{-N9^{-7}}.
\]
This is less than \(1\) when \(N>110(\log2)9^7\).  Hence, a family guarding
every bicyclic configuration exists for the asserted integer \(N\).
Guarding every bicyclic configuration is enough, because every
non-coverable two-spine configuration contains one.  Apply
\cref{thm:guard-characterization} to conclude.
\end{proof}
 
\begin{remark}
The bound in \cref{thm:n0} is only an existence estimate.  It
does not suggest that the true stabilization threshold is of comparable
size.
\end{remark}
 
\section{Exact finite cases}
\label{sec:finite-cases}
 
\subsection{Strategy on \(B_{2,3}\)}
 
We identify the seven colors with \(\F_7\).  The three pages use the affine
guess functions
\begin{equation}\label{eq:F7-pages}
   g_i(x,y)=x+\lambda_i(y-x),
   \qquad
   (\lambda_1,\lambda_2,\lambda_3)=(2,3,6).
\end{equation}
All arithmetic in this subsection is in \(\F_7\).
 
\begin{lemma}\label{lem:F7-basic}
When \(x\neq y\), the three guesses in \cref{eq:F7-pages} are legal and
pairwise distinct.  For every \(i\) and every \(\theta\in\F_7\), exactly
six ordered pairs \(x\neq y\) satisfy \(g_i(x,y)=\theta\).
\end{lemma}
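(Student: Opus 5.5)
The whole argument rests on writing each guess in the affine form \(g_i(x,y)=(1-\lambda_i)x+\lambda_i y\) and using \(d=y-x\neq0\).

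\textbf{Legality.} We have \(g_i(x,y)=x+\lambda_i d\). This equals \(x\) iff \(\lambda_i d=0\), which is impossible since \(\lambda_i\neq0\) and \(d\neq0\). It equals \(y\) iff \((\lambda_i-1)d=0\), which is impossible since \(\lambda_i\neq1\). The values \(2,3,6\) all avoid \(0\) and \(1\), so each guess is legal whenever \(x\neq y\).

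\textbf{Pairwise distinctness.} We have \(g_i-g_j=(\lambda_i-\lambda_j)d\). The differences \(2-3\), \(2-6\), \(3-6\) are all nonzero in \(\F_7\), and \(d\neq0\), so the three guesses are distinct.

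\textbf{Fiber count.} Fix \(i\) and \(\theta\). Parametrize ordered pairs with \(x\neq y\) by \((x,d)\) with \(d\in\F_7^{*}\). For each of the six values of \(d\), the equation \(x+\lambda_i d=\theta\) has the unique solution \(x=\theta-\lambda_i d\), and then \(y=x+d\) is determined. The map \((x,d)\mapsto(x,x+d)\) is a bijection onto the \(42\) ordered pairs \(x\neq y\), so exactly six pairs satisfy \(g_i(x,y)=\theta\). This matches the uniform count \(42/7=6\).

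There is no real obstacle here. The only point needing care is checking that \(\lambda_i\notin\{0,1\}\) and that the \(\lambda_i\) are distinct modulo \(7\); both are immediate for \(2,3,6\). The choice of these particular \(\lambda\)'s presumably matters for later lemmas, not for this one.
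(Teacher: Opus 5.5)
Your proof is correct and matches the paper's argument: legality via $\lambda_i\notin\{0,1\}$ and distinctness via $(\lambda_i-\lambda_j)(y-x)\neq0$ are the same. The only difference is cosmetic and lies in the fiber count. The paper solves for $y$ from each of the seven values of $x$ (using $\lambda_i\neq0$) and then discards the diagonal pair $x=y=\theta$, whereas your parametrization by $d=y-x\neq0$ never produces the diagonal pair at all.
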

 
\begin{proof}
The equality \(g_i(x,y)=x\) would force
\(\lambda_i(y-x)=0\), while \(g_i(x,y)=y\) would force
\((\lambda_i-1)(y-x)=0\).  Both are impossible because
\(\lambda_i\notin\{0,1\}\).  Moreover,
$
  g_i(x,y)-g_j(x,y)=(\lambda_i-\lambda_j)(y-x)\neq0
$
for \(i\neq j\).  Finally, for fixed \(\theta\), the equation
\(g_i(x,y)=\theta\) determines \(y\) uniquely from each \(x\). Among the
seven resulting pairs, only the diagonal pair \(x=y=\theta\) is
excluded.
\end{proof}
 
For \(\vec{z}=(z_1,z_2,z_3)\), the target \(T_{\vec z}\) is the bipartite
graph
\begin{equation}\label{eq:F7-target}
 T_{\vec z}=
 \set{(x,y):x\neq y,\ x,y\notin\{z_1,z_2,z_3\},\
        g_i(x,y)\neq z_i\ (1\leq i\leq3)}.
\end{equation}
A \emph{saturated decomposition} of a target is a partition
$
   T_{\vec z}=\cC_{\vec z}\mathbin{\dot\cup}\cR_{\vec z}
$
such that \(\cC_{\vec z}\) contains at most one edge in each column and
\(\cR_{\vec z}\) contains at most one edge in each row.  This is precisely
a two-spine coverability partition.
 
Consider the affine group
\[
  \AGL(1,\F_7)=
  \set{\sigma_{\alpha,c}:t\mapsto\alpha t+c:
        \alpha\in\F_7^\times,\ c\in\F_7},
\]
which acts diagonally on color triples and on ordered pairs.
 
\begin{lemma}[Equivariance]\label{lem:F7-equivariance}
For every \(\sigma\in\AGL(1,\F_7)\),
\[
  g_i(\sigma x,\sigma y)=\sigma g_i(x,y),
  \qquad
  \sigma(T_{\vec z})=T_{\sigma(\vec z)}.
\]
In other words, affine transport preserves saturated decompositions.
\end{lemma}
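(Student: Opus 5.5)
The first identity is a direct computation with the affine formula \cref{eq:F7-pages}. Write \(\sigma=\sigma_{\alpha,c}\), so \(\sigma t=\alpha t+c\). Then
\[
  g_i(\sigma x,\sigma y)=\alpha x+c+\lambda_i\bigl(\alpha y-\alpha x\bigr)
  =\alpha\bigl(x+\lambda_i(y-x)\bigr)+c=\sigma g_i(x,y).
\]
The key point is that the translation \(c\) cancels in \(y-x\), and the scalar \(\alpha\) commutes with \(\lambda_i\) because \(\F_7\) is commutative. The same computation works for any affine combination \(x+\lambda(y-x)\), so nothing special about \((2,3,6)\) is used here.

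For the second identity, I will check the three defining conditions of \cref{eq:F7-target} one at a time. Each uses that \(\sigma\) is a bijection of \(\F_7\) and acts diagonally, with \(\sigma(\vec z)=(\sigma z_1,\sigma z_2,\sigma z_3)\).
\begin{enumerate}
  \item \(x\neq y\) holds if and only if \(\sigma x\neq\sigma y\).
  \item \(x\notin\{z_1,z_2,z_3\}\) holds if and only if \(\sigma x\notin\{\sigma z_1,\sigma z_2,\sigma z_3\}\), and likewise for \(y\).
  \item By the first identity, \(g_i(x,y)\neq z_i\) holds if and only if \(\sigma g_i(x,y)\neq\sigma z_i\), that is, \(g_i(\sigma x,\sigma y)\neq\sigma z_i\).
\end{enumerate}
Hence \((x,y)\in T_{\vec z}\) exactly when \((\sigma x,\sigma y)\in T_{\sigma(\vec z)}\). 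Applying the same argument to \(\sigma^{-1}\), which is also in \(\AGL(1,\F_7)\), shows the map \(T_{\vec z}\to T_{\sigma(\vec z)}\) is onto, so \(\sigma(T_{\vec z})=T_{\sigma(\vec z)}\).

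For the final sentence, suppose \(T_{\vec z}=\cC_{\vec z}\mathbin{\dot\cup}\cR_{\vec z}\) is a saturated decomposition. The diagonal action sends row \(x\) to row \(\sigma x\) and column \(y\) to column \(\sigma y\), and it is bijective on rows and on columns separately. So \(\sigma(\cC_{\vec z})\) still has at most one edge per column, and \(\sigma(\cR_{\vec z})\) at most one edge per row. These two images partition \(\sigma(T_{\vec z})=T_{\sigma(\vec z)}\), so they form a saturated decomposition of \(T_{\sigma(\vec z)}\).

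There is no real obstacle here. The only point needing care is that \(\sigma\) acts on both coordinates and on \(\vec z\) by the same map, which is what keeps the legality and page-guess conditions compatible.
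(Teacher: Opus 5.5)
Your proposal is correct and follows the paper's proof. You use the same direct affine computation for the guess identity, then argue that \(\sigma\) is a bijection preserving every equality and inequality in the target definition and sending rows to rows and columns to columns bijectively; you simply write out the condition-by-condition check that the paper compresses into one sentence.
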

 
\begin{proof}
For \(\sigma(t)=\alpha t+c\),
$
  g_i(\alpha x+c,\alpha y+c)
  =\alpha\bigl(x+\lambda_i(y-x)\bigr)+c.
$
The target identity follows because \(\sigma\) is a bijection preserving
all equalities and inequalities in \cref{eq:F7-target}.  It sends rows to
rows and columns to columns bijectively, so saturation is preserved.
\end{proof}
 
\begin{lemma}[Nine orbits]\label{lem:F7-orbits}
The diagonal action of \(\AGL(1,\F_7)\) on \(\F_7^3\) has the nine
orbits listed in \cref{tab:F7-orbits}.
\begin{table}[htbp]
\centering
\begin{tabular}{lc}
\toprule
Orbit Type & Representative \\
\midrule
\(AAA\) & \((0,0,0)\) \\
\(AAB\) & \((0,0,1)\) \\
\(ABA\) & \((0,1,0)\) \\
\(BAA\) & \((1,0,0)\) \\
\(ABC,\ \kappa=2,3,4,5,6\) & \((0,1,\kappa)\) \\
\bottomrule
\end{tabular}
\caption{Orbits of the diagonal action of \(\AGL(1,\F_7)\) on \(\F_7^3\).}
\label{tab:F7-orbits}
\end{table}

The constant orbit has size \(7\). Every other orbit has
size \(42\).
\end{lemma}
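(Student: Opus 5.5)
The plan is to classify triples $(z_1,z_2,z_3)\in\F_7^3$ by their equality pattern, which the diagonal action of $\AGL(1,\F_7)$ preserves, and then count with orbit–stabilizer. Since $|\AGL(1,\F_7)|=42$, an orbit has size $42/|\mathrm{Stab}|$.

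First, the constant triples $(c,c,c)$ form one orbit, because translations act transitively on $\F_7$. The stabilizer of $(0,0,0)$ is the multiplicative subgroup $\{t\mapsto\alpha t\}$ of order $6$, so this orbit has size $7$.

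Next, take triples with exactly two equal coordinates. The position of the odd coordinate gives the three patterns $AAB$, $ABA$, $BAA$, and these are invariant because $\sigma$ is a bijection. Within one pattern, say $(u,u,v)$ with $u\neq v$, the group $\AGL(1,\F_7)$ acts sharply $2$-transitively on ordered pairs of distinct elements of $\F_7$. Hence it is transitive on such pairs with trivial stabilizer, and each of the three orbits has size $42$. There are $3\cdot 42=126$ such triples, which matches $3\cdot 7\cdot 6$.

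Finally, take triples with three distinct coordinates $(u,v,w)$. By sharp $2$-transitivity there is a unique $\sigma$ with $\sigma(u)=0$ and $\sigma(v)=1$. This sends the triple to $(0,1,\kappa)$ with
\[
\kappa=\frac{w-u}{v-u}\notin\{0,1\},
\]
and $\kappa$ is an invariant of the orbit. So there is exactly one orbit for each $\kappa\in\{2,3,4,5,6\}$, namely five orbits. Each has trivial stabilizer, since an affine map fixing $0$ and $1$ is the identity, so each has size $42$.

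The sizes are consistent: $7+3\cdot42+5\cdot42=343=7^3$. Counting orbits gives $1+3+5=9$. The only point needing care is the sharp $2$-transitivity of $\AGL(1,\F_7)$; everything else is routine.
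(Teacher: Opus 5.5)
Your proof is correct and follows essentially the same route as the paper. You split by equality pattern, use the invariant $\kappa=(z_3-z_1)/(z_2-z_1)$ for triples with distinct entries, and read off orbit sizes $7$ and $42$ from the stabilizers (scalings for the constant triple, trivial otherwise). Your appeal to the sharp $2$-transitivity of $\AGL(1,\F_7)$ simply makes explicit the transitivity and trivial-stabilizer claims that the paper states without further justification.
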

 
\begin{proof}
Constant triples form one orbit.  Triples with exactly two distinct entries
split according to the position of the unique entry.  If
\(z_1,z_2,z_3\) are distinct, the affine invariant
\[
  \kappa(\vec z)=\frac{z_3-z_1}{z_2-z_1}
\]
lies in \(\F_7\setminus\{0,1\}\) and completely determines the orbit.
The stabilizer of a constant triple consists of the six scalings about its
value.  Every non-constant triple has trivial stabilizer.  Thus, the orbit
sizes are \(7\) and \(42\). A quick verification gives
\(7+8\cdot42=343=\abs{\F_7^3}\).
\end{proof}

We first look at the constant representative in detail. Fix a non-zero row \(x\). For the multipliers
\(\lambda_i=2,3,6\), the equations \(g_i(x,y)=0\) exclude,
respectively, the columns
\[
    y=4x,\qquad y=3x,\qquad y=2x.
\]
Together with the diagonal \(y=x\), these leave exactly two target edges
in every nonzero row.  The resulting graph is a disjoint union of two
six-cycles.  One saturated decomposition is
\begin{align*}
\cC_{(0,0,0)}
 &=\{(1,5),(2,3),(3,1),(4,6),(5,4),(6,2)\},\\
\cR_{(0,0,0)}
 &=\{(1,6),(2,5),(3,4),(4,3),(5,2),(6,1)\}.
\end{align*}
The remaining representatives' exact edge lists
and decompositions appear in Appendix~\ref{app:orbits}. The relevant sizes are
given in \cref{tab:F7-sizes}.
\begin{table}[htbp]
\centering
{\small
\setlength{\tabcolsep}{3.5pt}
\begin{tabular}{lccccccccc}
\toprule
Orbit&\(AAA\)&\(AAB\)&\(ABA\)&\(BAA\)&\(ABC_2\)&\(ABC_3\)&\(ABC_4\)&\(ABC_5\)&\(ABC_6\)\\
\midrule
\(\abs{T_{\vec z}}\)&12&10&10&10&6&7&7&6&6\\
\(\abs{\cC_{\vec z}}\)&6&5&5&5&2&3&4&4&4\\
\(\abs{\cR_{\vec z}}\)&6&5&5&5&4&4&3&2&2\\
\bottomrule
\end{tabular}
}
\caption{Target and decomposition sizes for the nine orbit representatives.}
\label{tab:F7-sizes}
\end{table}

\pagebreak

Every displayed partition is saturated. Thus, the table is easily checkable to see that each representative's target is a pseudoforest.
 
For completeness, we now turn these local decompositions into explicit
global spine functions.  Given \(\vec z\), choose its representative
\(\vec z_0\) and the canonical affine map \(\sigma_{\vec z}\) in
\cref{tab:canonical-affine}.
\begin{table}[htbp]
\centering
\begin{tabular}{lll}
\toprule
Form of \(\vec z\)&\(\vec z_0\)&\(\sigma_{\vec z}(t)\)\\
\midrule
\((a,a,a)\)&\((0,0,0)\)&\(t+a\)\\
\((a,a,b),\ a\neq b\)&\((0,0,1)\)&\((b-a)t+a\)\\
\((a,b,a),\ a\neq b\)&\((0,1,0)\)&\((b-a)t+a\)\\
\((b,a,a),\ a\neq b\)&\((1,0,0)\)&\((b-a)t+a\)\\
\((a,b,c),\ a,b,c\text{ distinct}\)
 &\((0,1,\kappa),\ \kappa=(c-a)/(b-a)\)&\((b-a)t+a\)\\
\bottomrule
\end{tabular}
\caption{Canonical representatives and affine maps for the nine orbits.}
\label{tab:canonical-affine}
\end{table}

Direct substitution gives
\(\sigma_{\vec z}(\vec z_0)=\vec z\).  Define
$
  \cC_{\vec z}=\sigma_{\vec z}(\cC_{\vec z_0}) \
  \text{and} \
  \cR_{\vec z}=\sigma_{\vec z}(\cR_{\vec z_0}).
$
By \cref{lem:F7-equivariance}, this is a saturated decomposition of
\(T_{\vec z}\).
 
For a column \(y'\), let \(\widehat s_{1,\vec z_0}(y')=x'\) when
\((x',y')\) is the unique edge of \(\cC_{\vec z_0}\) in that column;
define it arbitrarily when the column is unused. Similarly, let
\(\widehat s_{2,\vec z_0}(x')=y'\) read the unique
\(\cR_{\vec z_0}\)-edge in row \(x'\) and arbitrary otherwise. Explicitly, the two spines use
\begin{align}
 s_1(y,\vec z)
   &=\sigma_{\vec z}\!\left(
       \widehat s_{1,\vec z_0}(\sigma_{\vec z}^{-1}(y))\right),
       \label{eq:spine1-explicit}\\
 s_2(x,\vec z)
   &=\sigma_{\vec z}\!\left(
       \widehat s_{2,\vec z_0}(\sigma_{\vec z}^{-1}(x))\right).
       \label{eq:spine2-explicit}
\end{align}
These functions cover \(\cC_{\vec z}\) and \(\cR_{\vec z}\),
respectively.  Thus, the page functions, \cref{tab:canonical-affine}, and
\cref{eq:spine1-explicit,eq:spine2-explicit} specify a deterministic
strategy on every input.
 
\begin{theorem}\label{thm:B23}
\(\HGP(B_{2,3})=7\).
\end{theorem}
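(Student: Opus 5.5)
The proof splits into the two inequalities. The upper bound \(\HGP(B_{2,3})\leq7\) is already recorded in \cref{eq:b23-upper}: the union bound of \cref{lem:union-bound} with \((k,n,q)=(2,3,8)\) gives \(5^3=125>98=2\cdot7^2\). Palette monotonicity (\cref{lem:q-monotone}) then excludes every \(q\geq8\). So it remains to exhibit a winning strategy with \(q=7\) colors, and for this I use the affine page functions \cref{eq:F7-pages}.

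For the lower bound I invoke the target criterion, \cref{lem:target-criterion}. By that lemma, the three page functions extend to a winning strategy if and only if every target \(T_{\vec z}\), \(\vec z\in\F_7^3\), is \(2\)-coverable. By \cref{lem:pseudoforest}, this is the same as requiring each target to be a pseudoforest, or to admit a saturated decomposition \(\cC_{\vec z}\mathbin{\dot\cup}\cR_{\vec z}\). Checking \(343\) targets directly is unnecessary because of symmetry. \Cref{lem:F7-equivariance} gives \(\sigma(T_{\vec z})=T_{\sigma(\vec z)}\), and it shows that \(\sigma\) carries rows to rows and columns to columns bijectively. Hence a saturated decomposition of \(T_{\vec z_0}\) transports to a saturated decomposition of \(T_{\sigma(\vec z_0)}\). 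By \cref{lem:F7-orbits} every \(\vec z\) lies in the orbit of one of the nine representatives, and \cref{tab:canonical-affine} gives an explicit \(\sigma_{\vec z}\) with \(\sigma_{\vec z}(\vec z_0)=\vec z\). It therefore suffices to verify that each of the nine representative targets has a saturated decomposition.

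The main work is this finite verification. For \((0,0,0)\) it is done in the text: each nonzero row keeps exactly two columns, the target is two disjoint six-cycles, and the displayed \(\cC,\cR\) split each cycle alternately. For the remaining eight representatives I would do the following.

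1. Compute \(T_{\vec z_0}\) row by row. For each row \(x\) outside \(\{z_1,z_2,z_3\}\), delete the column \(y=x\), the columns in \(\{z_1,z_2,z_3\}\), and, for each \(i\), the unique column solving \(x+\lambda_i(y-x)=z_i\), namely \(y=x+(z_i-x)\lambda_i^{-1}\).
2. Confirm the edge counts of \cref{tab:F7-sizes}.
3. Check that each listed \(\cC_{\vec z_0}\) uses distinct columns and each listed \(\cR_{\vec z_0}\) uses distinct rows, as in Appendix~\ref{app:orbits}.

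Alternatively, step 3 can be replaced by checking that every component has no more edges than vertices. Small targets (\(6\)–\(7\) edges on up to \(8\) vertices) are easy to handle this way, while the \(10\)-edge cases require care.

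Finally, the explicit spine functions \cref{eq:spine1-explicit,eq:spine2-explicit} realize the transported decompositions: spine \(1\) covers \(\cC_{\vec z}\) by reading the column, and spine \(2\) covers \(\cR_{\vec z}\) by reading the row. Suppose no page is correct on a proper coloring \((x,y,\vec z)\). Then \((x,y)\in T_{\vec z}\), so one of the two spines is correct, and the strategy wins with \(7\) colors. Combined with the upper bound, this gives \(\HGP(B_{2,3})=7\).
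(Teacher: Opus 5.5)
Your proposal is correct and follows the paper's proof essentially step for step. The upper bound comes from \cref{eq:b23-upper}, that is, \cref{lem:union-bound} at \(q=8\); the lower bound uses the affine pages \cref{eq:F7-pages}, is reduced by \cref{lem:F7-equivariance,lem:F7-orbits} to the nine representative decompositions of Appendix~\ref{app:orbits}, and is then closed by \cref{lem:target-criterion} together with the explicit spine functions \cref{eq:spine1-explicit,eq:spine2-explicit}, with the finite check of the eight non-constant representatives resting on the appendix tables exactly as in the paper.
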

 
\begin{proof}
The explicit decomposition above covers every target at \(q=7\), so
\cref{lem:target-criterion} proves the lower bound.  The upper bound is
\cref{eq:b23-upper}.
\end{proof}
 
\begin{remark}\label{rem:F7-stats}
The constant-orbit decomposition is invariant, part by part, under every
nonzero scaling; it is enough to check the generator \(t\mapsto3t\).
The complete strategy wins on all \(5250\) proper colorings.  Exactly \(2562\)
are won by a page, \(1386\) of the remaining colorings by spine \(1\), and
\(1302\) by spine \(2\).  In fact, every one of the
\(5\cdot4\cdot3=60\) ordered triples of distinct multipliers from
\(\F_7\setminus\{0,1\}\) gives a winning page strategy.
\end{remark}
 
\subsection{\texorpdfstring{Linear upper bound and \(B_{2,4}\)}
{Linear upper bound and B(2,4)}}
 
\begin{theorem}\label{thm:linear}
For every \(n\geq4\),
$
   \HGP(B_{2,n})\leq n+3.
$
\end{theorem}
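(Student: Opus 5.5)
We argue by contradiction with a refined double count. Put \(q=n+4\) and suppose some strategy on \(B_{2,n}\) wins with \(q\) colors. By \cref{lem:q-monotone} this is enough, and we may assume all page guesses are legal. By \cref{lem:target-criterion}, every target \(T_{\vec z}\) is then coverable. By \cref{lem:pseudoforest}, each \(T_{\vec z}\) is a pseudoforest on its row and column vertices. Both coordinates of every edge of \(T_{\vec z}\) lie in the set of \(m(\vec z)\coloneqq q-d(\vec z)\) colors not used on the pages, where \(d(\vec z)=\abs{\set{z_1,\dots,z_n}}\). So \(T_{\vec z}\) lives on at most \(2m(\vec z)\) vertices, and among these there are only \(m(\vec z)(m(\vec z)-1)\) legal pairs. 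This gives the pointwise bound
\[
  \abs{T_{\vec z}}\leq f\bigl(m(\vec z)\bigr),\qquad f(m)=\min\{2m,\;m(m-1)\}.
\]
The pseudoforest bound \(2m\) is essentially what \cref{lem:union-bound} uses. The gain comes from \(f(1)=0\) and \(f(2)=2\): when the pages use almost all colors, the target is far below the pseudoforest capacity.

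Next we double count \(\sum_{\vec z}\abs{T_{\vec z}}\). Each ordered pair \(x\neq y\) lies in \(T_{\vec z}\) exactly when every \(z_j\) avoids \(x\), \(y\) and \(g_j(x,y)\). Legality makes these three colors distinct, so there are \((q-3)^n\) such \(\vec z\), and
\[
  q(q-1)(q-3)^n=\sum_{\vec z}\abs{T_{\vec z}}\leq\sum_{\vec z\in[q]^n}f\bigl(q-d(\vec z)\bigr).
\]
We evaluate the right side by distributing \(\vec z\) according to \(d\). The number of \(\vec z\) with exactly \(d\) distinct values is \((q)_d\,S(n,d)\), where \(S(n,d)\) is a Stirling number of the second kind. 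Since \(q-d\geq4\), the value \(m=1\) never occurs. The savings therefore come from \(m=q-d\in\{4,5,\dots\}\) only through \(f=2m\), and this is the same as the union bound. Hence this crude version will not close the gap on its own. Indeed, at \(n=4\) the union bound already fails, since \(625<686\).

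The real work, and the step I expect to be the main obstacle, is a \emph{rigidity} improvement of the pointwise bound. First, \(\abs{T_{\vec z}}=2m\) forces every component of \(T_{\vec z}\) to be unicyclic and to use \emph{all} \(m\) free rows and all \(m\) free columns. Second, the pages impose structure row by row. In a fixed free row \(x\) the number of surviving edges is at most \((m-1)-\abs{\set{y:\exists j,\ g_j(x,y)=z_j}}\). Averaged over \(\vec z\), this exclusion count is large when \(n\) is large relative to \(m\), because every page guess is a legal color and hits some \(z_j\) with substantial probability.

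I would try the following sharpening first. For each free row \(x\), bound the row degree by \(\max\bigl(0,\,m-1-h_x(\vec z)\bigr)\), where \(h_x(\vec z)\) counts the columns killed by the pages, and apply the pseudoforest bound only to the remaining edges. Then replace \(2m\) by \(2m-\delta(\vec z)\), where \(\delta(\vec z)\) counts free rows or columns of degree zero in \(T_{\vec z}\). Finally, average \(\delta\) using the fact that, for each \(j\), the map \(y\mapsto g_j(x,y)\) takes the value \(z_j\) on some column for a positive fraction of inputs. The goal is the strict inequality \((n+3)(n+1)^n>\mathbb E\)-weighted capacity for all \(n\geq4\). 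This should be checked exactly for small \(n\), say \(4\leq n\leq7\), and asymptotically via \((1+2/(n+1))^n\to e^2\) for larger \(n\). For \(n\geq7\) we have \(n+3\geq C_2-1\), so \cref{cor:C2} already gives the claim. It therefore suffices to handle the finitely many cases \(n=4,5,6\), where \(q=8,9,10\). For these, the sharpened count, possibly supplemented by a direct escape argument for a \(K_{2,3}\) or configuration-(2) obstruction from \cref{lem:extremal} together with \cref{lem:escape}, should give the contradiction.
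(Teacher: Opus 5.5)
\textbf{There is a genuine gap.} The step you call the main obstacle is never carried out, and as set up it is unlikely to work. Your crude count is exactly the union bound, which fails at $q=8$, $n=4$: there are $56\cdot 5^4=35000$ surviving incidences against a capacity of $2\cdot 8\cdot 7^4=38416$.

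To close this with the isolated-vertex correction, you would need $\sum_{\vec z}\delta(\vec z)>2q(q-1)^n-q(q-1)(q-3)^n$ for \emph{every} legal page strategy. At $q=8$, $n=4$ this means more than about $0.83$ isolated free vertices per page vector on average. Nothing in the proposal forces this, and there is reason to doubt it. If each $g_j(x,\cdot)$ is injective, as for the affine guesses $x+\lambda(y-x)$, then page $j$ removes at most one edge from each free row. A free row is then isolated only when the $n$ pages delete its $m-1$ other free columns essentially one apiece, which is a rare coincidence.

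The underlying problem is the uniform weighting over $[q]^n$. The page vectors on which a winning strategy is actually overloaded form a vanishing fraction of $[q]^n$, so a uniform average washes them out.

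Two smaller errors:
\begin{enumerate}
  \item Your reduction is off by one. The bound $\HGP(B_{2,n})\le 11\le n+3$ needs $n\ge 8$, so $n=7$ (that is, $q=11$) still remains.
  \item The escape fallback gives nothing by size alone at $q\le 11$. Every non-coverable $P$ has $\abs P+\abs{\supp(P)}\ge 11\ge q$, so each obstruction is guardable by a single page. Showing that some obstruction escapes all $n$ pages is therefore the whole problem, not a supplement.
\end{enumerate}

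\textbf{The missing idea} is to restrict the adversary to structured page vectors and extract rigidity from a tight count.
\begin{enumerate}
  \item Consider the $q(q-1)(q-2)$ colorings $(x,y,c,\dots,c)$. Each page is correct for at most one $c$ per $(x,y)$. Each spine is correct for at most one hidden color per visible input. So the $n+2=q-2$ players have at most $(q-2)q(q-1)$ successes, which is exactly the number of these colorings.
  \item Equality therefore holds throughout. This forces every page guess to be legal and the $n$ page guesses at each $(x,y)$ to be pairwise distinct.
  \item Now color the pages in two blocks of sizes $n_1=\lfloor n/2\rfloor$ and $n_2=\lceil n/2\rceil$, using colors $a\neq b$. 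By distinctness, each $(x,y)$ is defeated by exactly $(n_1+2)(n_2+2)-2$ ordered pairs $(a,b)$ in $[q]\setminus\{x,y\}$.
  \item For each fixed $(a,b)$, the two spines cover at most $2(q-2)$ pairs $(x,y)$.
  \item Double counting gives $n_1n_2\le 2$, which is impossible for $n\ge 4$.
\end{enumerate}
Note that only the plain saturation capacity $2(q-2)$ is used. The gain comes from choosing page vectors on which the pages within a block are mutually exclusive, not from sharpening the capacity as your plan attempts.
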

 
\begin{proof}
Let \(q=n+4\). Suppose a winning strategy exists; we seek a contradiction. First, restrict the adversary to the
$
   q(q-1)(q-2)
$
colorings, where$\
   (x,y,c,c,\dots,c),
   \ x\neq y,\ \text{and}\ c\notin\{x,y\}.$
Each of the \(n\) pages is correct on at most \(q(q-1)\) of these colorings.
For each \((x,y)\), its one guess agrees with at most one admissible \(c\).
Spine \(1\) also has at most \(q(q-1)\) successes, because its visible input
is determined by \((y,c)\) and at most one hidden \(x\) can equal its
guess. By symmetry, the same holds for spine \(2\).
 
There are \(n+2=q-2\) players, so the sum of their success counts is at
most
\[
  (n+2)q(q-1)=q(q-1)(q-2),
\]
which coincides with the number of restricted colorings.  A winning strategy has at least one
success on every coloring, so equality holds throughout.  Consequently,
every restricted coloring has exactly one correct player, and every player
attains its individual upper bound.
 
It follows that for each fixed \((x,y)\), the page guesses
$
   g_1(x,y),\dots,g_n(x,y)
$
are pairwise distinct elements of
\(L=[q]\setminus\{x,y\}\).  Indeed, each page must attain its per-input
maximum, so its guess is legal; two equal page guesses would make two pages
correct on the corresponding uniform coloring.
 
Partition the pages into sets \(S_1,S_2\) of sizes
\[
  n_1=\lfloor n/2\rfloor,\qquad n_2=\lceil n/2\rceil.
\]
For distinct colors \(a,b\), color every page in \(S_1\) by \(a\) and every
page in \(S_2\) by \(b\).  Fix \((x,y)\) disjoint from \(\{a,b\}\).
Let \(H_i\subseteq L\) be the page guesses from \(S_i\).  The sets
\(H_1,H_2\) are disjoint, have sizes \(n_1,n_2\), and leave a two-element
set \(M=L\setminus(H_1\cup H_2)\).  As \((a,b)\) ranges over the ordered
distinct pairs in \(L\), all pages fail exactly when
\[
  a\in H_2\cup M,\qquad b\in H_1\cup M,\qquad a\neq b.
\]
The number of such pairs is
\begin{equation}\label{eq:linear-failure-count}
  (n_2+2)(n_1+2)-2.
\end{equation}
The subtraction removes the two diagonal choices from \(M\times M\).
 
Double-count the incidences between ordered spine pairs \((x,y)\) and
ordered page colors \((a,b)\), all four colors being appropriately
disjoint, for which every page fails.  Summing
\cref{eq:linear-failure-count} over \((x,y)\) gives
\[
  q(q-1)\bigl((n_1+2)(n_2+2)-2\bigr)
\]
incidences.  For fixed \((a,b)\), spine \(1\) can be correct on at most one
pair per one of the \(q-2\) available columns, and spine \(2\) on at most
one per available row.  Since every page-failure coloring must be covered
by a spine, summing this capacity over \((a,b)\) gives
\[
 q(q-1)\bigl((n_1+2)(n_2+2)-2\bigr)
 \leq q(q-1)\,2(q-2).
\]
Canceling \(q(q-1)\), using \(q-2=n+2\), and \(n_1+n_2=n\) yields
\[
   n_1n_2+2n+2\leq2n+4
   \Longrightarrow n_1n_2\leq2.
\]
However, \(n\geq4\) gives \(n_1,n_2\geq2\), so a contradiction arises.  Thus, no strategy
wins with \(q=n+4\), and \cref{lem:q-monotone} proves the claim.
\end{proof}
 
\begin{corollary}\label{cor:B24}
\(\HGP(B_{2,4})=7\).
\end{corollary}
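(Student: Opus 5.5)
The corollary follows by combining a lower bound and an upper bound that are both already available.

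For the lower bound, \(B_{2,3}\) is an induced subgraph of \(B_{2,4}\): delete one page. So \cref{lem:induced-monotone} together with \cref{thm:B23} gives
\[
\HGP(B_{2,4})\geq\HGP(B_{2,3})=7.
\]
Explicitly, the fourth page guesses arbitrarily. The other three pages and the two spines play the seven-color affine strategy of \cref{eq:F7-pages,eq:spine1-explicit,eq:spine2-explicit}, ignoring the extra page's color. Every proper coloring of \(B_{2,4}\) restricts to a proper coloring of \(B_{2,3}\), so some player among them is correct.

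For the upper bound, apply \cref{thm:linear} with \(n=4\), which is allowed since the theorem covers every \(n\geq4\). This gives
\[
\HGP(B_{2,4})\leq 4+3=7.
\]
It is worth checking that the hypotheses hold at this boundary value. With \(q=8\), the counting argument splits the pages as \(n_1=n_2=2\). The resulting inequality \(n_1n_2\leq2\) then fails, since \(n_1n_2=4\), so no \(8\)-color strategy exists. Palette monotonicity (\cref{lem:q-monotone}) rules out every larger palette as well.

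Combining the two bounds gives \(\HGP(B_{2,4})=7\). There is no real obstacle here: the substantive work was already done in \cref{thm:B23} and \cref{thm:linear}.
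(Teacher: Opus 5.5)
Your proposal is correct and is exactly the paper's argument. The upper bound \(7\) comes from \cref{thm:linear} at \(n=4\), and the matching lower bound from \cref{lem:induced-monotone} together with \cref{thm:B23}.
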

 
\begin{proof}
\Cref{thm:linear} gives the upper bound \(7\), while
\cref{lem:induced-monotone,thm:B23} give the matching lower bound.
\end{proof}
 
\section{Affine ceiling}\label{sec:affine}
 
\subsection{Construction}
 
Let \(q\) be a prime power and identify the palette with \(\F_q\).  For a
set \(S\subseteq\F_q\setminus\{0,1\}\) of distinct multipliers, assign one
page to each \(\lambda\in S\) and let it guess
\begin{equation}\label{eq:affine-general}
   g_\lambda(x,y)=x+\lambda(y-x).
\end{equation}
The legality, distinctness, and affine equivariance arguments from
\cref{lem:F7-basic,lem:F7-equivariance} hold verbatim.
 
\begin{lemma}[Affine monotonicity]\label{lem:affine-monotone}
If the affine strategy for \(S\) wins and
\(\lambda\notin S\cup\{0,1\}\), then the strategy for
\(S\cup\{\lambda\}\) wins.  Thus, within the admissible range
\(\abs S\leq q-2\), a failure of the maximal multiplier set
\(\F_q\setminus\{0,1\}\) rules out every affine strategy of the form
\cref{eq:affine-general}.
\end{lemma}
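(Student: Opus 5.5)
The first assertion follows from the target criterion, \cref{lem:target-criterion}, once we show that adding a page can only shrink every target. Fix page functions for \(S\) and add the page \(g_\lambda\). Page vectors for \(S\cup\{\lambda\}\) are pairs \((\vec z,z_\lambda)\) with \(\vec z\in\F_q^{S}\) and \(z_\lambda\in\F_q\). By the definition \cref{eq:general-target}, the new target is
\[
  T'_{(\vec z,z_\lambda)}
  =\set{(x,y)\in T_{\vec z}: z_\lambda\notin\{x,y\},\ g_\lambda(x,y)\neq z_\lambda}
  \subseteq T_{\vec z}.
\]
Coverability is hereditary: restricting a coverability partition (equivalently, a saturated decomposition \(\cC\mathbin{\dot\cup}\cR\)) to a subset preserves saturation. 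Hence if every \(T_{\vec z}\) is coverable, so is every \(T'_{(\vec z,z_\lambda)}\), and \cref{lem:target-criterion} gives a winning completion. Alternatively, one can argue directly: the old spine functions ignore the new page's color, and any coloring on which all new pages fail is one on which all old pages fail, so the old spines still win. I would present this direct version, since it avoids re-deriving spine functions.

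Two small points need care. First, the new page's guesses are legal because \(\lambda\notin\{0,1\}\); this is the argument of \cref{lem:F7-basic}, which holds verbatim. Legality is not actually needed for winning, only for the strategy to be of the stated form. Second, \(\lambda\notin S\) ensures the enlarged family is still of the form \cref{eq:affine-general} with distinct multipliers.

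For the second assertion, every admissible \(S\) satisfies \(S\subseteq\F_q\setminus\{0,1\}\), which has exactly \(q-2\) elements. Adding the missing multipliers one at a time and applying the first part repeatedly shows that if \(S\) wins then \(\F_q\setminus\{0,1\}\) wins. Taking the contrapositive, failure of the maximal set rules out every affine strategy of this form. I expect no real obstacle; the only subtlety is to state precisely that the strategy on the larger page set leaves the spines' guesses unchanged as functions of the old page colors.
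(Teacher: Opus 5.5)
Your proposal is correct and is essentially the paper's proof. The paper also observes that each new target $T'_{(\vec z,z_\lambda)}$ is a subgraph of the old target $T_{\vec z}$, and concludes because a subgraph of a pseudoforest is a pseudoforest; this is your heredity-of-coverability step, read through \cref{lem:pseudoforest}. Your alternative ``direct'' version, in which the old spines ignore the new page, is just the argument of \cref{lem:induced-monotone}, and it is equally valid.
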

 
\begin{proof}
For a page vector \((\vec z,z_{\lambda})\), the new target is a subgraph
of the old target \(T_{\vec z}\). It satisfies all old constraints and
one additional page constraint, and its available row and column colors
can only decrease.  A subgraph of a pseudoforest is a pseudoforest.
\end{proof}
 
\begin{theorem}[\(\mathrm{GF}(8)\) strategy]\label{thm:GF8}
On \(B_{2,6}\), the affine strategy over \(\mathrm{GF}(8)\) with all six
multipliers in \(\mathrm{GF}(8)\setminus\{0,1\}\) is winning.  Consequently,
$
   \HGP(B_{2,n})\geq8\ \text{for every }n\geq6.
$
\end{theorem}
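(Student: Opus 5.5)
The plan is to apply the target criterion, \cref{lem:target-criterion}, together with the pseudoforest criterion, \cref{lem:pseudoforest}. With the six pages indexed by \(\lambda\in\mathrm{GF}(8)\setminus\{0,1\}\), it suffices to show that for every page vector \(\vec z=(z_\lambda)_\lambda\in\F_8^6\) the target
\[
T_{\vec z}=\set{(x,y):x\neq y,\ x,y\notin\{z_\lambda\},\ x+\lambda(y-x)\neq z_\lambda\ \text{for all }\lambda}
\]
is a pseudoforest. The final clause \(\HGP(B_{2,n})\geq8\) for \(n\geq6\) then follows from \cref{lem:induced-monotone}.

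\textbf{Local structure of a row.} Fix a row \(x\) not among the page colors. Page \(\lambda\) excludes the column
\[
y=x+\lambda^{-1}(z_\lambda-x),
\]
and this is a genuine exclusion (different from \(x\)) exactly when \(z_\lambda\neq x\), which always holds here. Writing \(\mu=\lambda^{-1}\), the excluded columns are \(x+\mu(z_\lambda-x)\) with \(\mu\in\F_8\setminus\{0,1\}\).

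\textbf{Sanity check: constant \(\vec z\).} If \(\vec z\equiv c\), these exclusions are \(x+\mu(c-x)\) for all six \(\mu\neq0,1\). Together with \(y=x\) (the case \(\mu=0\)) and \(y=c\) (the case \(\mu=1\)), every column is excluded. So the target is empty, and this explains why the full multiplier set is so strong.

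\textbf{General \(\vec z\).} If \(d\) distinct colors appear in \(\vec z\), then rows and columns range over only \(8-d\) colors. Each row loses its diagonal and up to six further columns, although exclusions may coincide or land on already-forbidden page colors. I will bound each row degree by \(1+(\text{number of coincidences})\) and try to show that every component has at most as many edges as vertices.

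\textbf{Reduction to finitely many checks.} Rather than hunting for a fully uniform structural proof, I will use symmetry. By affine equivariance (the analogue of \cref{lem:F7-equivariance}, which holds verbatim), \(\AGL(1,\F_8)\), of order \(56\), acts on \(\F_8^6\) and transports targets and saturated decompositions. In addition, the Frobenius map \(t\mapsto t^2\) satisfies
\[
g_\lambda(x^2,y^2)=\bigl(g_{\lambda^{1/2}}(x,y)\bigr)^2,
\]
so it acts by permuting the pages (\(\lambda\mapsto\lambda^2\)) together with the colors. This gives a group of order \(168\) acting on the \(8^6=262144\) page vectors, leaving on the order of \(1600\) or more orbits. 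For each orbit representative, I will compute \(T_{\vec z}\) and verify the pseudoforest condition \(e(H)\leq v(H)\) for every component \(H\), or equivalently exhibit a saturated decomposition as in Appendix~\ref{app:orbits}.

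\textbf{Supporting hand argument.} I will complement the computation with a short argument covering the small cases where \(\vec z\) takes one or two distinct values. There, essentially every row is nearly empty, as in the constant case.

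\textbf{Main obstacle.} The hard part is making this finite verification trustworthy and reportable, since it is not a hand proof. I would first try to show that every row and every column of \(T_{\vec z}\) has degree at most \(2\) when \(d\) is small. When \(d\) is large, only \(8-d\leq 4\) or so colors remain, giving a bipartite graph on at most about eight vertices, where \(e\leq v\) can be checked quickly by a capacity count as in \cref{lem:extremal}. Recall that bicyclicity needs \((r-1)(c-1)\geq t+2\); this rules out tiny supports immediately. Only the intermediate values of \(d\) would then need the orbit enumeration.
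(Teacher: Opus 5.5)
Your plan is essentially the paper's proof. The paper likewise reduces, via \cref{lem:target-criterion} and \cref{lem:pseudoforest}, to checking that every target \(T_{\vec z}\) is a pseudoforest, verifies this by exhaustive computation over all \(8^6\) page vectors, and then applies \cref{lem:induced-monotone}. Your orbit reduction by \(\mathrm{A}\Gamma\mathrm{L}(1,\F_8)\), with Frobenius permuting the pages by \(\lambda\mapsto\lambda^2\), is a valid shortcut but not needed.

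One caution concerns your proposed hand shortcut for small \(d\): it already fails at \(d=2\). By \cref{lem:two-block} such targets have \(\abs{A}\,\abs{B}\) edges, up to \(9\). For example, take \(\omega^3=\omega+1\), page color \(0\) on \(\{\omega,\omega^2,\omega^3\}\) and page color \(1\) on \(\{\omega^4,\omega^5,\omega^6\}\); then row \(\omega^2\) has the three target columns \(\omega,\omega^3,\omega^5\). So only \(d=1\) (empty target) and \(d\geq5\) (at most three available colors) are trivial, and your enumeration must genuinely cover \(d=2,3,4\).
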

 
\begin{proof}
This proof uses an exhaustive
computation.  For each of the \(8^6=262{,}144\) page vectors, the verifier
constructs the target graph and checks every component has at most as many
edges as vertices.  No target fails.  The field arithmetic, enumeration
order, pseudoforest test, and independent cross-check are described in
\cref{sec:computation}.  The conclusion for larger \(n\)
follows from \cref{lem:induced-monotone}.
\end{proof}
 
\begin{remark}\label{rem:GF8-five}
The threshold of six pages is sharp within this construction.  There are
six five-element subsets of
\(\mathrm{GF}(8)\setminus\{0,1\}\), with each failing on exactly \(224\) of the
\(8^5\) page vectors.
\end{remark}
 
\subsection{Two-block target}
 
Use the maximal multiplier set
\(\Lambda=\F_q\setminus\{0,1\}\) and partition it as
\(\Lambda=A\mathbin{\dot\cup}B\).  Give page color \(0\) to the
multipliers in \(A\) and page color \(1\) to those in \(B\).
 
\begin{lemma}[Two-block target size]\label{lem:two-block}
The resulting target has
$
   \abs{E(T)}=\abs A\,\abs B.
$
The count depends only on the two block sizes.
\end{lemma}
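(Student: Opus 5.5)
The plan is to compute the target directly and then exhibit an explicit bijection between its edges and the product set \(B\times A\). Since the page colors are \(0\) and \(1\), a pair \((x,y)\) with \(x\neq y\) lies in \(T\) exactly when three things hold: \(x,y\notin\{0,1\}\); \(g_\lambda(x,y)\neq0\) for every \(\lambda\in A\); and \(g_\lambda(x,y)\neq1\) for every \(\lambda\in B\). If one block is empty, only one page color is present, so I would treat that degenerate case separately. For the main case I assume both blocks are nonempty, so that both colors \(0\) and \(1\) are actually excluded from the spine.

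The key step is to solve the linear equations \(g_\lambda(x,y)=0\) and \(g_\lambda(x,y)=1\) for \(\lambda\), for a fixed admissible pair \((x,y)\). Since \(y-x\neq0\), each equation has a unique solution:
\[
  \mu(x,y)=\frac{x}{x-y},\qquad \nu(x,y)=\frac{x-1}{x-y}.
\]
Because \(x\neq0\) and \(y\neq0\), we get \(\mu\notin\{0,1\}\). Because \(x\neq1\) and \(y\neq1\), we get \(\nu\notin\{0,1\}\). Hence both values lie in \(\Lambda\), and every multiplier other than \(\mu\) (respectively \(\nu\)) guesses something different from \(0\) (respectively \(1\)). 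Therefore the pages with color \(0\) all fail iff \(\mu\notin A\), that is \(\mu\in B\). Likewise, the pages with color \(1\) all fail iff \(\nu\in A\). So \((x,y)\in T\) iff \((\mu,\nu)\in B\times A\).

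It remains to show that \(\Phi:(x,y)\mapsto(\mu,\nu)\) is a bijection. Its domain is the ordered distinct pairs in \(\F_q\setminus\{0,1\}\), and its codomain is the ordered distinct pairs in \(\Lambda\). The map lands off the diagonal because \(\mu-\nu=1/(x-y)\neq0\). The same identity gives the inverse: set \(d=1/(\mu-\nu)\), \(x=\mu d\), and \(y=x-d=(\mu-1)d\). I would then check that this inverse stays in the domain:
\begin{itemize}
  \item \(x\neq0\) because \(\mu\neq0\);
  \item \(x\neq1\) because \(\nu\neq0\);
  \item \(y\neq0\) because \(\mu\neq1\);
  \item \(y\neq1\) because \(\nu\neq1\);
  \item \(x\neq y\) because \(d\neq0\).
\end{itemize}
Both sets have size \((q-2)(q-3)\), which is consistent. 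Since \(A\) and \(B\) are disjoint, every element of \(B\times A\) is automatically an off-diagonal pair. Hence \(\Phi\) restricts to a bijection from \(E(T)\) onto \(B\times A\), and \(\abs{E(T)}=\abs A\,\abs B\), which depends only on the block sizes.

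I expect the main obstacle to be the membership bookkeeping: confirming that \(\mu\) and \(\nu\) really avoid \(0\) and \(1\), and that the inverse map never produces a spine color equal to a page color. These are exactly the places where the exclusions \(x,y\notin\{0,1\}\) are used. The rest is routine algebra in \(\F_q\).
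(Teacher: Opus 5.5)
Your proof is correct and follows the paper's argument. Both solve $g_\lambda(x,y)=0$ and $g_\lambda(x,y)=1$ for the unique multipliers $\mu=x/(x-y)$ and $\nu=(x-1)/(x-y)$ in $\Lambda$, observe that $(x,y)\in T$ iff $(\mu,\nu)\in B\times A$, and invert $(x,y)\mapsto(\mu,\nu)$ explicitly. Your check that the inverse lands in the ordered distinct pairs of $\F_q\setminus\{0,1\}$ spells out a step the paper leaves implicit. The empty-block case you defer is immediate: for every admissible pair, $\mu$ (or $\nu$) still lies in the single nonempty block, so the target is empty and $\abs{E(T)}=0=\abs A\,\abs B$.
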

 
\begin{proof}
The available spine colors are
\(L=\F_q\setminus\{0,1\}=\Lambda\).  For \(x,y\in L\), \(x\neq y\), the
unique multipliers for which \cref{eq:affine-general} equals \(0\) and
\(1\) are
\[
   \mu_0(x,y)=\frac{x}{x-y},
   \qquad
   \mu_1(x,y)=\frac{x-1}{x-y}.
\]
Both belong to \(\Lambda\).  The pages all fail precisely when
\(\mu_0\notin A\) and \(\mu_1\notin B\), or equivalently
\((\mu_0,\mu_1)\in B\times A\).
 
The map \((x,y)\mapsto(\mu_0,\mu_1)\) is a bijection from the ordered
distinct pairs in \(L\) to the ordered distinct pairs in \(\Lambda\).  Its
inverse is
\[
   x=\frac{\mu_0}{\mu_0-\mu_1},
   \qquad
   y=\frac{\mu_0-1}{\mu_0-\mu_1}.
\]
Since \(A\cap B=\varnothing\), all pairs in \(B\times A\) are distinct,
and the target has the asserted size.
\end{proof}
 
\begin{theorem}[Affine ceiling]\label{thm:affine-ceiling}
For every prime power \(q\geq9\), no affine strategy of the form
\cref{eq:affine-general} wins on \(B_{2,n}\) for any admissible number of
distinct multipliers \(n\leq q-2\).
\end{theorem}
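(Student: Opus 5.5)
By \cref{lem:affine-monotone} it suffices to treat the maximal multiplier set \(\Lambda=\F_q\setminus\{0,1\}\), with \(n=q-2\) pages. If the affine strategy for \(\Lambda\) loses, then so does the strategy for every admissible \(S\subseteq\Lambda\). For \(\Lambda\) I will exhibit one page vector whose target \(T\) is not a pseudoforest. By \cref{lem:pseudoforest} and \cref{lem:target-criterion}, this shows the strategy fails. The page vector will be the two-block vector of \cref{lem:two-block}: multipliers in \(A\) receive page color \(0\) and those in \(B\) receive page color \(1\), where \(\Lambda=A\mathbin{\dot\cup}B\) is chosen balanced, \(\abs A=\lfloor (q-2)/2\rfloor\) and \(\abs B=\lceil (q-2)/2\rceil\).

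The key counting step is as follows. Every component of a pseudoforest has at most as many edges as vertices, so any pseudoforest satisfies \(\abs{E(T)}\leq v(T)\). All edges of the target have both endpoints in \(L=\F_q\setminus\{0,1\}\), and rows and columns are separate copies. Hence \(v(T)\leq 2\abs L=2(q-2)\). By \cref{lem:two-block}, \(\abs{E(T)}=\abs A\abs B\), so the target fails to be a pseudoforest as soon as
\[
  \Bigl\lfloor \tfrac{q-2}{2}\Bigr\rfloor\Bigl\lceil \tfrac{q-2}{2}\Bigr\rceil>2(q-2).
\]
The left side is roughly \((q-2)^2/4\), so this holds once \(q-2>8\). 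I will check the small prime powers directly. For \(q=11\) the comparison is \(4\cdot5=20>18\); for \(q=13\) it is \(5\cdot6=30>22\); for \(q=16\) it is \(7\cdot7=49>28\). For larger \(q\) the inequality is monotone. This settles every prime power \(q\geq11\).

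The main obstacle is \(q=9\). There the crude count gives only \(3\cdot4=12\leq14\), so the two-block bound alone is inconclusive. I will first try to sharpen the vertex count. Using the bijection \((x,y)\mapsto(\mu_0,\mu_1)\) onto \(B\times A\) from \cref{lem:two-block}, I can describe exactly which rows \(x=\mu_0/(\mu_0-\mu_1)\) and columns \(y=(\mu_0-1)/(\mu_0-\mu_1)\) occur. Then I will choose the split \(A\mathbin{\dot\cup}B\) of the seven multipliers so that the twelve edges touch at most eleven vertices, or so that a single component carries more edges than vertices. If no clean algebraic choice appears, I will fall back on a finite search over \(\mathrm{GF}(9)\). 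By \cref{lem:affine-monotone}, it suffices to verify that at least one page vector in \(\F_9^7\) (for instance, a two-block or three-block vector) produces a target with a component having \(e>v\). This is checked with the same pseudoforest verifier used in \cref{thm:GF8}. Combining the \(q=9\) case with the counting argument for \(q\geq11\) and \cref{lem:affine-monotone} gives the theorem.
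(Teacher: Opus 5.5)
Your argument for \(q\geq 11\) is the paper's argument: reduce to \(\Lambda\) via \cref{lem:affine-monotone}, take the balanced two-block vector, and compare \(\lfloor (q-2)^2/4\rfloor\) with the \(2(q-2)\) available vertices. That part is fine.

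The gap is \(q=9\), which is the only case that needs a new idea. You correctly note that the crude count \(12\leq 14\) is inconclusive, but then you only announce that you will look for a good split or, failing that, run a search. You never exhibit a defeating page vector, and you never establish that one exists, so nothing is proved at \(q=9\). Resting this case on an unreported run of a verifier would also make it a load-bearing computation, whereas the paper's certificate can be checked by hand. The paper uses the three-color vector \(\vec z=(0,0,0,0,1,1,2)\) in the order \cref{eq:F9-order}. Its target contains the connected component \(D\) of \cref{eq:F9-certificate}, which has \(8\) vertices and \(9\) edges.

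Your own first idea can in fact be completed without a computer. In the two-block target, the edge indexed by \((b,a)\in B\times A\) has column \(y=(b-1)/(b-a)=1/(1-s)\) with \(s=(a-1)/(b-1)\), so the column depends only on \(s\). Split \(\Lambda\) by the quadratic character of \(\lambda-1\):
\(A=\{1+\xi,1+2\xi,2\}\), where \(\lambda-1\in\{\xi,2\xi,1\}\) is a nonzero square, and
\(B=\{\xi,2\xi,2+\xi,2+2\xi\}\), where \(\lambda-1\) is a nonsquare.
In the order \cref{eq:F9-order} this is \(\vec z=(1,0,1,0,0,1,1)\). Every \(s\) is then a nonsquare, so at most \(4\) of the \(7\) columns occur. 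The target therefore has \(\abs A\abs B=12\) edges on at most \(7+4=11\) vertices. Hence some component has more edges than vertices, and the target is not a pseudoforest. Supplying either certificate would complete your proof; as written, the \(q=9\) case is missing.
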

 
\begin{proof}
By \cref{lem:affine-monotone}, it suffices to defeat the maximal strategy.
For \(q\geq11\), split the \(q-2\) multipliers as evenly as possible.
\Cref{lem:two-block} gives
\[
  \abs{E(T)}
  =\left\lfloor\frac{(q-2)^2}{4}\right\rfloor
  >2(q-2)
\]
because \(q-2>8\).  The target has at most \(2(q-2)\) row and column
vertices, so it is not a pseudoforest.
 
It remains to treat \(q=9\).  Write
\(\F_9=\F_3[\xi]/(\xi^2+1)\) and fix throughout the remainder of this proof
the multiplier order
\begin{equation}\label{eq:F9-order}
 (\lambda_1,\dots,\lambda_7)
   =(\xi,\ 1+\xi,\ 2\xi,\ 1+2\xi,\ 2,\ 2+\xi,\ 2+2\xi),
\end{equation}
so that page \(i\) guesses \(g_{\lambda_i}\).  Take the page vector
$
   \vec z=(0,0,0,0,1,1,2)
$
relative to \cref{eq:F9-order}.  A direct substitution in
\cref{eq:affine-general} gives \(\abs{T_{\vec z}}=10\); the target is the
disjoint union of the isolated edge \((\xi,2\xi)\) and the connected
subgraph \(D\) on the row and column copies of
\[
  1+\xi,\quad1+2\xi,\quad2+\xi,\quad2+2\xi
\]
whose nine edges are
\begin{multline}\label{eq:F9-certificate}
 \bigl(\{1+\xi,1+2\xi\}\times\{2+\xi,2+2\xi\}\bigr)
 \ \cup\
 \bigl(\{2+\xi,2+2\xi\}\times\{1+\xi,1+2\xi\}\bigr)
 \ \cup\ \{(2+2\xi,2+\xi)\}.
\end{multline}
We stress that \cref{eq:F9-certificate} displays the component \(D\), not
the whole target. \(D\) is a proper subgraph of \(T_{\vec z}\), which has
one further edge.  The certificate does not depend on that extra edge.
Indeed, \(D\) is connected with eight vertices and nine edges, so \(D\) is
not a pseudoforest, and hence neither is \(T_{\vec z}\).  Each of the ten
memberships in the target is a direct substitution in
\cref{eq:affine-general}, so the whole computation is checkable by hand.
 
Permuting the assignment of multipliers to pages merely permutes the
coordinates of a defeating page vector, so the obstruction applies to
every multiplier ordering.  This completes the maximal case, and
\cref{lem:affine-monotone} rules out all subsets.
\end{proof}
 
\begin{remark}\label{rem:F9-count}
Among the \(105\) distinct assignments of the multiset with multiplicities
$
  0^4,\ 1^2,\ \text{and }2^1
$
to the multiplier order \cref{eq:F9-order}, exactly three produce a
non-pseudoforest.  A separate full enumeration finds \(8568\) failing page
vectors among all \(9^7=4{,}782{,}969\) vectors.  That enumeration lists the
multipliers in the ascending order of their integer encodings rather than in
the order \cref{eq:F9-order}, so it reports a different first failure; the
two orders are related by a permutation of the page coordinates and defeat
the same strategy.  These counts are not assumptions in the proof, but are
verified in \cref{sec:computation}.
\end{remark}
 
\section{General spine size}\label{sec:general-k}
 
\subsection{Box obstructions}
 
\begin{theorem}\label{thm:box}
Let \(k\geq2\) and let integers \(m_1,\dots,m_k\geq2\) satisfy
$
   \sum_{i=1}^k\frac1{m_i}<1.
$
Then
\[
   C_k\leq \prod_{i=1}^k m_i+\sum_{i=1}^k m_i.
\]
Consequently, the same expression is an upper bound for
\(\HGP(B_{k,n})\) for every \(n\).
\end{theorem}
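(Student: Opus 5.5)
The natural candidate is the box configuration. Choose pairwise disjoint color sets \(S_1,\dots,S_k\) with \(\abs{S_i}=m_i\), and put \(P=S_1\times\cdots\times S_k\). Disjointness makes every tuple proper, so \(P\subseteq\Omega_{k,q}\) after relabeling into any sufficiently large palette. Its parameters are immediate: \(\abs P=\prod_i m_i\) and \(\abs{\supp(P)}=\sum_i m_i\). So the whole task is to show that \(P\) is non-coverable. The bound on \(C_k\) then follows from the definition \cref{eq:Ck-intro}, and the bound on \(\HGP(B_{k,n})\) for every \(n\) follows from \cref{thm:Ck}, which gives \(\sup_n\HGP(B_{k,n})=C_k\).

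For non-coverability I would apply \cref{lem:hall-coverability} with \(Q=P\) itself. The projection \(\pi_i(P)\) deletes coordinate \(i\), so it is the product \(\prod_{j\neq i}S_j\) of size \(\prod_{j\neq i}m_j=M/m_i\), where \(M=\prod_j m_j\). Summing over \(i\) gives
\[
  \sum_{i=1}^k\abs{\pi_i(P)}=M\sum_{i=1}^k\frac1{m_i}<M=\abs P.
\]
This is exactly where the hypothesis \(\sum_i 1/m_i<1\) is used. Hall's condition \cref{eq:hall-coverability} therefore fails at \(Q=P\), and \(P\) is not \(k\)-coverable. Equivalently, without invoking Hall: an \(i\)-saturated subset has at most \(\abs{\pi_i(P)}\) elements, so \(k\) such subsets cover at most \(\sum_i M/m_i<M\) tuples. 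This is the same argument as in the existence remark with all \(m_i=k+1\).

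There is essentially no hard step. The points to check are these:
\begin{itemize}
  \item the sets \(S_i\) must be disjoint, so that the tuples are proper;
  \item the condition \(m_i\geq2\) is harmless;
  \item the minimum defining \(C_k\) is over arbitrary finite color sets, so no palette-size constraint arises.
\end{itemize}
For the final statement, \cref{thm:Ck} gives \(\HGP(B_{k,n})\leq C_k\) for every \(n\). Alternatively, as in \cref{rem:direct12}, one can argue directly at \(q=\abs P+\abs{\supp(P)}+1\): no page can guard \(P\), so \cref{thm:guard-characterization} together with \cref{lem:q-monotone} gives the same conclusion.
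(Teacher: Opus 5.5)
Your proposal is correct and follows the paper's proof essentially verbatim. Both use the disjoint box \(S_1\times\cdots\times S_k\) and the count showing that \(k\) saturated parts cover at most \(\prod_i m_i\sum_i 1/m_i<\abs P\) tuples; your Hall-at-\(Q=P\) phrasing is the same computation. Both then conclude via the definition of \(C_k\) and \cref{thm:Ck}. Your alternative route through \cref{thm:guard-characterization} and \cref{lem:q-monotone} is simply the first half of the proof of \cref{thm:Ck} specialized to this \(P\), so it is equally valid.
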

 
\begin{proof}
Choose pairwise disjoint color sets \(S_1,\dots,S_k\) with
\(\abs{S_i}=m_i\) and consider the box
\[
   P=S_1\times\cdots\times S_k.
\]
Every tuple in \(P\) is proper.  An \(i\)-saturated subset has at most
\(\prod_{j\neq i}m_j\) elements, one for each visible
\((k-1)\)-tuple.  Therefore, \(k\) saturated parts can cover at most
\[
   \sum_{i=1}^k\prod_{j\neq i}m_j
   =\left(\prod_{i=1}^k m_i\right)
      \left(\sum_{i=1}^k\frac1{m_i}\right)
   <\prod_{i=1}^k m_i=\abs P
\]
tuples.  Hence, \(P\) is non-coverable.  Its support has size
\(\sum_i m_i\) so the definition of \(C_k\) gives the bound.
\Cref{thm:Ck} then gives the uniform bound on \(\HGP\).
\end{proof}
 
For small \(k\), minimizing the box expression gives \cref{tab:box-bounds}.
\begin{table}[htbp]
\centering
\begin{tabular}{ccll}
\toprule
\(k\)&Optimal side lengths&Box bound on \(C_k\)
  &Previous bound\\
\midrule
2&\((2,3)\)&11&\(<21\)\\
3&\((3,3,4)\)&46&\(<136\)\\
4&\((3,4,5,5)\)&317&\(<1450\)\\
5&\((4,5,5,5,7)\)&3526&\(<20490\)\\
\bottomrule
\end{tabular}
\caption{Optimized box bounds for general spine sizes.}
\label{tab:box-bounds}
\end{table}

Each listed optimum is unique up to permuting the side lengths.  The search
is finite without imposing an arbitrary cutoff.  Once an admissible tuple
of objective value \(M\) is known, any improving tuple has
\(\prod_jm_j<M\). Since the other \(k-1\) sides are at least \(2\),
$
  m_i<\frac{M}{2^{k-1}}
$
for every \(i\).  Exhaustive search within these caps
proves the first four optima in \cref{tab:box-bounds}.

\section{Computational verification}\label{sec:computation}
 
All computations are exhaustive and deterministic. All Python programs use
only the standard library. All C programs conform to C11.  A one-command
driver compiles with
\[
\texttt{gcc -std=c11 -O2 -Wall -Wextra -Wpedantic}
\]
and checks every headline output.  No random seeds, external data, or
configuration files are used. The complete verification software is provided as supplementary material.

\
 
The analytic arguments prove the coverability characterization,
stabilization theorem, extremal value \(C_2=11\), linear bound, two-block
lemma, affine obstruction, and general box theorem.

Several statements below are supported by exhaustive enumeration, and it is
worth separating those that carry logical weight from those that are merely
informative.  Only \cref{thm:GF8} is both load-bearing and beyond hand
verification: its proof requires that all \(8^6=262{,}144\) targets be
pseudoforests, and no shorter argument is offered here.  Every other
enumeration either supports a claim that can also be checked by hand, or
reports a count on which no proof depends.  Specifically, the finite orbit
tables for \(B_{2,3}\) underlying \cref{thm:B23} are printed in
Appendix~\ref{app:orbits} and can be checked directly from there. The \(q=9\) case of
\cref{thm:affine-ceiling} rests on the ten target memberships of
\cref{eq:F9-certificate}, each a single substitution. The counts in
\cref{rem:GF8-five,rem:F9-count} are reported
for completeness and are used nowhere in any proof.  The programs in
\cref{tab:verification-programs} recompute all of these.

{\small
\begin{longtable}{@{}
  >{\ttfamily\raggedright\arraybackslash}p{0.29\textwidth}
  >{\raggedright\arraybackslash}p{0.67\textwidth}@{}}
\toprule
\textnormal{File}&\textnormal{Purpose}\\
\midrule
\endfirsthead
\toprule
\textnormal{File}&\textnormal{Purpose (continued)}\\
\midrule
\endhead
\bottomrule
\endfoot
\bottomrule
\noalign{\vskip\abovecaptionskip}
\caption{Verification programs.}
\label{tab:verification-programs}\\
\endlastfoot
verify\_b23.py&
Recomputes the nine \(\AGL(1,\F_7)\)-orbit targets, validates every
decomposition, checks canonical transport and stabilizer invariance, plays
all \(5250\) proper colorings, reproduces the win census, and tests all
\(60\) ordered multiplier triples.\\
verify\_b24\_counts.py&
Checks the finite counting identities underlying the \(n=4\) specialization
of \cref{thm:linear}, including all \(90\) ordered disjoint pairs of
two-subsets of a six-set.\\
enum\_configs.py&
Independently enumerates the extremal two-spine configurations and verifies
the minimum and equality classification in \cref{lem:extremal}.\\
cert\_q9.py&
Reconstructs \cref{eq:F9-certificate} edge by edge, checks the three
successful attachments among \(105\), and validates an independent first
failure from the full enumerator.\\
boxopt.py&
Performs the capped exact-rational search giving \cref{tab:box-bounds} and
checks uniqueness up to order.\\
common.h&
Implements the documented \(\F_7\), \(\mathrm{GF}(8)\), and
\(\mathrm{GF}(9)\) arithmetic and a union--find pseudoforest test.\\
verify\_affine.c&
Enumerates every page vector for the affine results in
\cref{thm:GF8,rem:GF8-five,thm:affine-ceiling}.\\
checkfull.c&
General target verifier for an arbitrary strategy table.  Its
component-by-component depth-first pseudoforest test shares no code with
\texttt{common.h}.\\
% run\_all.sh&
% Compiles both C programs, runs the complete suite, compares \(23\) expected
% outputs, and exits nonzero on any mismatch.\\
% README.txt&
% Documents dependencies, field encodings, commands, input formats, and
% expected outputs.\\
% LICENSE&
% MIT license for the software artifact.\\
\end{longtable}
}
 
% \subsection{Independent checks and exact totals}
 
% The finite-field encodings are fixed as follows.
% \begin{itemize}
%   \item \(\F_7\): integers \(0,\dots,6\), with arithmetic modulo \(7\).
%   \item \(\mathrm{GF}(8)\): binary polynomials modulo
%         \(X^3+X+1\).
%   \item \(\mathrm{GF}(9)\): the integer \(3a+b\) represents
%         \(a+bX\) in \(\F_3[X]/(X^2+1)\).  In this encoding the field
%         element \(1\) is the integer \(3\), and the nonzero, nonunit
%         multipliers are encoded by
%         \(\{1,2,4,5,6,7,8\}\).
% \end{itemize}
 
% The exhaustive affine runs produce the following exact totals.
% \begin{center}
% \small
% \begin{tabular}{cccc}
% \toprule
% \(q\)&pages&page vectors&bad targets\\
% \midrule
% \(7\)&\(3\)&\(343\)&\(0\)\\
% \(8\)&\(6\)&\(262{,}144\)&\(0\)\\
% \(8\)&\(5\)&\(32{,}768\)&\(224^\ast\)\\
% \(9\)&\(7\)&\(4{,}782{,}969\)&\(8568\)\\
% \bottomrule
% \end{tabular}
% \end{center}
% Here \({}^\ast\) means \(224\) bad targets for each of the six
% five-multiplier subsets.
% For the five-page \(\mathrm{GF}(8)\) example, \texttt{run\_all.sh}
% generates the entire strategy table independently and passes it to
% \texttt{checkfull.c}; that separate implementation again returns exactly
% \(224\) bad targets.  The driver also verifies that malformed input is
% rejected with the documented exit status.

\section{Conclusion}

\subsection{Current status and conjecture}

Combining all preceding results with the values for \(n=1,2\)
from~\cite{ABBCBKMMRT} gives the current picture of two-spine book graphs,
summarized in \cref{tab:status}. Note that
\(n_0\) here denotes the least stabilization threshold, not the crude upper
bound in \cref{thm:n0}.

\begin{table}[htbp]
\centering
{\small
\begin{tabular}{ccl}
\toprule
\(n\) & Updated range & Source \\
\midrule
\(1\) & \(5\) & \(B_{2,1}=K_3\); \cite{ABBCBKMMRT} \\
\(2\) & \(6\) & \(B_{2,2}=K_4-e\); \cite{ABBCBKMMRT} \\
\(3\) & \(7\) & \Cref{thm:B23} \\
\(4\) & \(7\) & \Cref{cor:B24} \\
\(5\) & \(\{7,8\}\) & Monotonicity; \cref{thm:linear} \\
\(6\) & \(\{8,9\}\) & \Cref{thm:GF8,thm:linear} \\
\(7\) & \(\{8,9,10\}\) & \Cref{thm:GF8,thm:linear} \\
\(8\leq n<n_0\) & \(\{8,9,10,11\}\)
    & \Cref{thm:GF8}; \cref{cor:C2} \\
\(n\geq n_0\) & \(11\)
    & \Cref{thm:n0}; \cref{cor:C2} \\
\bottomrule
\end{tabular}
}
\caption{Current status of two-spine book graphs.}
\label{tab:status}
\end{table}
 
\begin{conjecture}\label{conj:finite}
For every \(n\geq4\),
$
   \HGP(B_{2,n})=\min\{n+3,11\}.
$
\end{conjecture}
 
The upper bound in \cref{conj:finite} is proved.  Its lower bound remains
open for the finite range before stabilization.

\subsection{Open extremal questions}
 
Taking \(m_1=\cdots=m_k=k+1\) in \cref{thm:box} yields
\begin{equation}\label{eq:uniform-k}
   C_k\leq(k+1)^k+k(k+1)
        =\exp\bigl((1+o(1))k\log k\bigr).
\end{equation}
The bound from \cref{eq:correct-comparison} and the ordinary book theorem is
\[
  (k+1)\left(2+\sum_{m=1}^k m^m\right),
\]
which is larger by a factor of order \(k\).  Unlike that comparison, the
box bound applies to every \(n\), not only to sufficiently large books.
 
For \(k=2\), the box \((2,3)\) is the first extremal configuration in
\cref{lem:extremal} and attains \(C_2\).  For \(k\geq3\), the
stabilization theorem identifies the exact asymptotic value as \(C_k\), but
determining \(C_k\), and deciding whether a box is always extremal, remains
open.

\appendix
\numberwithin{table}{section}
 
\section{Orbit tables for \texorpdfstring{$B_{2,3}$}{B2,3} at \texorpdfstring{$q=7$}{q=7}}\label{app:orbits}
 
For each representative \(\vec z\), this appendix gives the target
adjacency list and a saturated decomposition
\(T_{\vec z}=\cC_{\vec z}\dot\cup\cR_{\vec z}\).
The set \(\cC\) has at most one edge in each column, and \(\cR\) has at most
one edge in each row.  Every entry is recomputed by
\texttt{verify\_b23.py}.
The target-column data are listed in
\cref{tab:orbit-AAA,tab:orbit-AAB,tab:orbit-ABA,tab:orbit-BAA,%
tab:orbit-ABC2,tab:orbit-ABC3,tab:orbit-ABC4,tab:orbit-ABC5,%
tab:orbit-ABC6}.
 
\subsection*{\texorpdfstring{\(AAA\): \(\vec z=(0,0,0)\), \(\abs T=12\)}
{AAA: z=(0,0,0), |T|=12}}
 
\begin{table}[htbp]
\centering
{\small
\begin{tabular}{c@{\qquad}l}
\toprule
Row \(x\)&Target columns\\
\midrule
1&\(\{5,6\}\)\\
2&\(\{3,5\}\)\\
3&\(\{1,4\}\)\\
4&\(\{3,6\}\)\\
5&\(\{2,4\}\)\\
6&\(\{1,2\}\)\\
\bottomrule
\end{tabular}
}
\caption{Target columns for the \(AAA\) representative.}
\label{tab:orbit-AAA}
\end{table}
\begin{align*}
\cC&=\{(1,5),(2,3),(3,1),(4,6),(5,4),(6,2)\},\\
\cR&=\{(1,6),(2,5),(3,4),(4,3),(5,2),(6,1)\}.
\end{align*}

\pagebreak
 
\subsection*{\texorpdfstring{\(AAB\): \(\vec z=(0,0,1)\), \(\abs T=10\)}
{AAB: z=(0,0,1), |T|=10}}
 
\begin{table}[htbp]
\centering
{\small
\begin{tabular}{c@{\qquad}l}
\toprule
Row \(x\)&Target columns\\
\midrule
2&\(\{4,5\}\)\\
3&\(\{4,6\}\)\\
4&\(\{3,6\}\)\\
5&\(\{3,4\}\)\\
6&\(\{2,5\}\)\\
\bottomrule
\end{tabular}
}
\caption{Target columns for the \(AAB\) representative.}
\label{tab:orbit-AAB}
\end{table}
\begin{align*}
\cC&=\{(2,5),(3,6),(4,3),(5,4),(6,2)\},\\
\cR&=\{(2,4),(3,4),(4,6),(5,3),(6,5)\}.
\end{align*}
 
\subsection*{\texorpdfstring{\(ABA\): \(\vec z=(0,1,0)\), \(\abs T=10\)}
{ABA: z=(0,1,0), |T|=10}}
 
\begin{table}[htbp]
\centering
{\small
\begin{tabular}{c@{\qquad}l}
\toprule
Row \(x\)&Target columns\\
\midrule
2&\(\{3,5,6\}\)\\
3&\(\{2,4\}\)\\
4&\(\{5,6\}\)\\
5&\(\{2,4\}\)\\
6&\(\{4\}\)\\
\bottomrule
\end{tabular}
}
\caption{Target columns for the \(ABA\) representative.}
\label{tab:orbit-ABA}
\end{table}
\begin{align*}
\cC&=\{(2,3),(2,6),(3,4),(4,5),(5,2)\},\\
\cR&=\{(2,5),(3,2),(4,6),(5,4),(6,4)\}.
\end{align*}
 
\subsection*{\texorpdfstring{\(BAA\): \(\vec z=(1,0,0)\), \(\abs T=10\)}
{BAA: z=(1,0,0), |T|=10}}
 
\begin{table}[htbp]
\centering
{\small
\begin{tabular}{c@{\qquad}l}
\toprule
Row \(x\)&Target columns\\
\midrule
2&\(\{3\}\)\\
3&\(\{4,5\}\)\\
4&\(\{2,3\}\)\\
5&\(\{2,4,6\}\)\\
6&\(\{2,3\}\)\\
\bottomrule
\end{tabular}
}
\caption{Target columns for the \(BAA\) representative.}
\label{tab:orbit-BAA}
\end{table}
\begin{align*}
\cC&=\{(3,5),(4,3),(5,4),(5,6),(6,2)\},\\
\cR&=\{(2,3),(3,4),(4,2),(5,2),(6,3)\}.
\end{align*}

\
 
\subsection*{\texorpdfstring{\(ABC_2\): \(\vec z=(0,1,2)\), \(\abs T=6\)}
{ABC2: z=(0,1,2), |T|=6}}
 
\begin{table}[htbp]
\centering
{\small
\begin{tabular}{c@{\qquad}l}
\toprule
Row \(x\)&Target columns\\
\midrule
3&\(\{6\}\)\\
4&\(\{5\}\)\\
5&\(\{3,4\}\)\\
6&\(\{4,5\}\)\\
\bottomrule
\end{tabular}
}
\caption{Target columns for the \(ABC_2\) representative.}
\label{tab:orbit-ABC2}
\end{table}
\begin{align*}
\cC&=\{(5,4),(6,5)\},\\
\cR&=\{(3,6),(4,5),(5,3),(6,4)\}.
\end{align*}
 
\subsection*{\texorpdfstring{\(ABC_3\): \(\vec z=(0,1,3)\), \(\abs T=7\)}
{ABC3: z=(0,1,3), |T|=7}}
 
\begin{table}[htbp]
\centering
{\small
\begin{tabular}{c@{\qquad}l}
\toprule
Row \(x\)&Target columns\\
\midrule
2&\(\{5,6\}\)\\
4&\(\{6\}\)\\
5&\(\{2,4\}\)\\
6&\(\{4,5\}\)\\
\bottomrule
\end{tabular}
}
\caption{Target columns for the \(ABC_3\) representative.}
\label{tab:orbit-ABC3}
\end{table}
\begin{align*}
\cC&=\{(2,6),(5,4),(6,5)\},\\
\cR&=\{(2,5),(4,6),(5,2),(6,4)\}.
\end{align*}
 
\subsection*{\texorpdfstring{\(ABC_4\): \(\vec z=(0,1,4)\), \(\abs T=7\)}
{ABC4: z=(0,1,4), |T|=7}}
 
\begin{table}[htbp]
\centering
{\small
\begin{tabular}{c@{\qquad}l}
\toprule
Row \(x\)&Target columns\\
\midrule
2&\(\{3,5,6\}\)\\
3&\(\{6\}\)\\
5&\(\{2,3\}\)\\
6&\(\{5\}\)\\
\bottomrule
\end{tabular}
}
\caption{Target columns for the \(ABC_4\) representative.}
\label{tab:orbit-ABC4}
\end{table}
\begin{align*}
\cC&=\{(2,3),(2,5),(2,6),(5,2)\},\\
\cR&=\{(3,6),(5,3),(6,5)\}.
\end{align*}

\pagebreak
 
\subsection*{\texorpdfstring{\(ABC_5\): \(\vec z=(0,1,5)\), \(\abs T=6\)}
{ABC5: z=(0,1,5), |T|=6}}
 
\begin{table}[htbp]
\centering
{\small
\begin{tabular}{c@{\qquad}l}
\toprule
Row \(x\)&Target columns\\
\midrule
2&\(\{3\}\)\\
3&\(\{2,4,6\}\)\\
4&\(\{6\}\)\\
6&\(\{4\}\)\\
\bottomrule
\end{tabular}
}

\caption{Target columns for the \(ABC_5\) representative.}
\label{tab:orbit-ABC5}
\end{table}
\begin{align*}
\cC&=\{(2,3),(3,2),(3,4),(3,6)\},\\
\cR&=\{(4,6),(6,4)\}.
\end{align*}
 
\subsection*{\texorpdfstring{\(ABC_6\): \(\vec z=(0,1,6)\), \(\abs T=6\)}
{ABC6: z=(0,1,6), |T|=6}}
 
\begin{table}[htbp]
\centering
{\small
\begin{tabular}{c@{\qquad}l}
\toprule
Row \(x\)&Target columns\\
\midrule
2&\(\{3\}\)\\
3&\(\{2,4\}\)\\
4&\(\{5\}\)\\
5&\(\{2,3\}\)\\
\bottomrule
\end{tabular}
}
\caption{Target columns for the \(ABC_6\) representative.}
\label{tab:orbit-ABC6}
\end{table}
\begin{align*}
\cC&=\{(2,3),(3,4),(4,5),(5,2)\},\\
\cR&=\{(3,2),(5,3)\}.
\end{align*}

\section*{Funding}

This research did not receive any specific grant from funding
agencies in the public, commercial, or not-for-profit sectors.

\section*{Conflict of interest}
 
The author declares no competing interests.
 
% \section*{Acknowledgments}
 
% The author thanks Anna G for her helpful comments and support.
 
% \section*{Data and software availability}
 
% The complete deterministic software artifact is distributed with this
% manuscript.  It includes source code, a one-command test driver, expected
% outputs, documentation, and an open-source license.
 
% \section{Artifact commands and expected results}\label{app:artifact}
 
% From the artifact's \texttt{code/} directory, run
% \begin{verbatim}
% sh run_all.sh
% \end{verbatim}
% The final line is
% \begin{verbatim}
% ALL 23 EXPECTED-OUTPUT CHECKS PASSED
% \end{verbatim}
% The complete run takes roughly one minute on a current laptop and is
% dominated by the \(9^7\) affine enumeration.
 
% The general verifier is invoked as
% \begin{verbatim}
% ./checkfull q n strategyfile maxreport
% \end{verbatim}
% The strategy file contains \(q^2n\) whitespace-separated integers
% \(g_j(x,y)\), with page index outermost, then row and column.  Diagonal
% entries are ignored and are conventionally written as \(-1\).
% \texttt{checkfull} prints \texttt{WIN} when every target is a pseudoforest,
% and otherwise prints the first requested failures followed by
% \texttt{TOTALBAD} and the exact count.

\section*{Declaration of generative AI and AI-assisted technologies
in the manuscript preparation process}

During the preparation of this work, the author used
OpenAI's GPT-5.6 Sol for literature searching, language editing, supplementary code organization, and LaTeX formatting.
After using this tool, the author reviewed and edited the
content as needed and takes full responsibility for the content of
the published article.

\end{document}